\newtheorem{theorem}{Theorem}[section]
\newtheorem{lemma}[theorem]{Lemma}
\newtheorem{remark}[theorem]{Remark}
\newtheorem{corollary}[theorem]{Corollary}
\numberwithin{equation}{section}
\newdimen\bibspace
\renewenvironment{thebibliography}[1]{%
	\section*{\refname 
		\@mkboth{\MakeUppercase\refname}{\MakeUppercase\refname}}%
	\list{\@biblabel{\@arabic\c@enumiv}}%
	{\settowidth\labelwidth{\@biblabel{#1}}%
		\leftmargin\labelwidth
		\advance\leftmargin\labelsep
		\itemsep\bibspace
		\parsep\z@skip     %
		\@openbib@code
		\usecounter{enumiv}%
		\let\p@enumiv\@empty
		\renewcommand\theenumiv{\@arabic\c@enumiv}}%
	\sloppy\clubpenalty4000\widowpenalty4000%
	\sfcode`\.\@m}
{\def\@noitemerr
	{\@latex@warning{Empty `thebibliography' environment}}%
	\endlist}
\def\XXint#1#2#3{{\setbox0=\hbox{$#1{#2#3}{\int}$}
		\vcenter{\hbox{$#2#3$}}\kern-.5\wd0}}
\newcommand{\be}{\begin{equation}}      \newcommand{\ee}{\end{equation}}
\begin{document}	
	\title{\bf\Large Entire solutions of the generalized Hessian inequality
		\footnotetext{\hspace{-0.35cm}Jiguang Bao
			\endgraf jgbao@bnu.edu.cn
			\vspace{0.25cm}
			\endgraf Xiang Li
			\endgraf 202031130022@mail.bnu.edu.cn
			\vspace{0.25cm}
			\endgraf Jing Hao
			\endgraf 202021130025@mail.bnu.edu.cn
						\vspace{0.25cm}
			\endgraf All authors are supported in part by the National Natural Science Foundation of China (11871102).
	}}
	\vspace{0.25cm}
	\author{Xiang Li,\ \ Jing Hao,\ \ Jiguang Bao\footnote{School of Mathematical Sciences, Beijing Normal University,
			Laboratory of Mathematics and Complex Systems, Ministry of Education, Beijing 100875, China}}
	\date{}
	\maketitle
\vspace{-0.8cm}
\noindent{\bf Abstract}		
 In this paper, we discuss the more  general Hessian inequality $\sigma_{k}^{\frac{1}{k}}(\lambda (D_i (A$ $\left(|Du|\right) D_j u)))\geq f(u)$ including the Laplacian, p-Laplacian, mean curvature, Hessian, k-mean curvature operators, and provide a necessary and sufficient condition on the global solvability, which can be regarded as  generalized Keller-Osserman conditions.
	
\noindent{\bf Keywords} generalized Hessian inequality $\cdot$ existence $\cdot$ nonexistence $\cdot$ Keller-Osserman condition

\noindent{\bf Mathematics Subject Classification} $35{\rm J}60 \cdot 35{\rm A}01$	
	\section{Introduction and the statement of results}
In this paper, we discuss the solvability of the generalized Hessian  inequality
\begin{equation}\label{101}
\sigma_{k}^{\frac{1}{k}}\left(\lambda\left(D_i\left(A\left(|Du|\right)D_j u\right)\right)\right) \geq f(u)  \text { in }  \mathbb{R}^{n},
\end{equation}
where
 $$
 \sigma_{k}(\lambda)=\sum_{1 \leq i_{1}<\cdots<i_{k} \leq n} \lambda_{i_{1}} \cdots \lambda_{i_{k}},  \quad \lambda=\left(\lambda_{1}, \lambda_{2}, \cdots, \lambda_{n}\right) \in \mathbb{R}^{n}, \ k=1,2, \cdots, n
 $$
 is the k-th elementary symmetric function, $\lambda\left(D_i\left(A\left(|Du|\right)D_j u\right)\right)$ denotes the eigenvalues  of the symmetric matrix of $\left(D_i\left(A\left(|Du|\right)D_j u\right)\right)$, and $A$, $f$ are two given positive continuous functions on $(0,+\infty)$.
 
 The generalized Hessian operator $\sigma_{k}\left(\lambda\left(D_i\left(A\left(|Du|\right)D_j u\right)\right)\right)$, introduced by   many authors \cite{r2,r3,r4}, is an important class of fully nonlinear operator. It is a generalization of some typical operators we shall be interested in  as follows:   the m-k-Hessian operator for the case $ A(p)=p^{m-2}$, $m>1$ is treated by Trudinger and Wang \cite{r5};
  the k-mean curvature operator for the case $ A(p)=\left(1+p^{2}\right)^{-\frac{1}{2}}$  is treaded by  Concus and Finn \cite{r19} and Peletier and  Serrin \cite{r20}; 
  the generalized k-mean curvature operator for the case $A(p)=\left(1+p^{2}\right)^{-\alpha}$, $\alpha < \frac{1}{2}$ and $ A(p)=p^{2 m-2}\left(1+p^{2 m}\right)^{-\frac{1}{2}}$, $m>1$
  is treated  by  Tolksdorf \cite{r7},  Usami \cite{r8} and Suzuki  \cite{r9}, respectively.

 In particular, (\ref{101}) is the k-Hessian innequality for the case $ A(p)=1$.  For $k=1$, Wittich (n=2 \cite{r24}),  Haviland (n=3 \cite{r25}), Walter ($n\geq2$ \cite{r26}) proved the Laplacian equation
  \begin{equation*}
 \Delta u=f(u) \text { in } \mathbb{R}^{n}
 \end{equation*}
 has no solution  if  and only if
  \begin{equation*}
 \int^{\infty}\left(\int^{s} f(t) d t\right)^{-\frac{1}{2}} d s<\infty.
 \end{equation*}
 Here and after, we omit the lower limit to admit an arbitrary positive number. Keller \cite{r15} and Osserman \cite{r16} showed that the Laplacian inequality 
   \begin{equation*}
 \Delta u\geq f(u) \text { in } \mathbb{R}^{n}
 \end{equation*}
has a positive  solution $u\in C^2\left(\mathbb{R}^{n}\right)$ if and only if $f$ satisfies the Keller-Osserman condition
 \begin{equation}\label{111}
 \int^{\infty}\left(\int^{s} f(t) d t\right)^{-\frac{1}{2}} d s=\infty.
 \end{equation}
 The condition (\ref{111}) is often used to study the  boundary blow-up (explosive, large) solutions (see \cite{r21,r22,r23}). Ji and Bao \cite{r13} extended the above results from $k=1$ to 
 $1\leq k\leq n$,  which can be regardes as the generalized Keller-Osserman condition. Naito and Usami \cite{r10} extended the above results  from $A(p)=1$ to the generalized Hessian inequality  (\ref{101}) for $k=1$ and got similar results.

 In this paper, we shall extend this result  from $k=1$ to 
 $1\leq k\leq n$ for the generaralized Hessian inequality (\ref{101}) and develop existence and nonexistence conditions of entire solutions  for (\ref{101}). 
  To state our results, we define a generalized k-convex  entire solution of (\ref{101})  to be a function $u\in\Phi^{k}\left(\mathbb{R}^{n}\right)$ which satisfies (\ref{101}) at each $x \in \mathbb{R}^{n}$, where
 $$
\Phi^{k}\left(\mathbb{R}^{n}\right)=\left\{u \in  C^{1}\left(\mathbb{R}^{n}\right): A\left(|Du|\right)D u\in C^{1}\left(\mathbb{R}^{n}\right),\  \lambda\left(D_i\left(A\left(|Du|\right)D_j u\right)\right) \in \Gamma_{k} \text { in } \mathbb{R}^{n}\right\},
 $$
and
 $$
 \Gamma_{k}:=\left\{\lambda \in \mathbb{R}^{n}: \sigma_{l}(\lambda)>0,\ l=1,2, \cdots, k\right\}.
 $$

 In $(\ref{101})$, we assume that the positive function $A\in C^1(0, \infty)$ satisfies
  \begin{equation}\label{103}
  p A(p) \in C[0, \infty) \text { is strictly monotone increasing  in }(0,\infty),
 \end{equation}
  and the positive function $f\in C(0,\infty)$  satisfies 
 \begin{equation}\label{109}
 f \text { is  monotone non-decreasing  in }(0,\infty).
 \end{equation}

 First we consider the case 
 \begin{equation}\label{114}
 	\lim _{p \rightarrow \infty} p A(p)<\infty.
 \end{equation}

 \begin{theorem}\label{102}
 	Assume that $A$  satisfies (\ref{103}), (\ref{114}) and $f$ satisfies (\ref{109}), then the inequality (\ref{101}) has no positive solution  $u\in C^{2}\left(\mathbb{R}^{n} \backslash\{0\}\right) \cap \Phi^{k}\left(\mathbb{R}^{n}\right)$.
 \end{theorem}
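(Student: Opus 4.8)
The plan is to exploit the boundedness condition (\ref{114}) to show that a solution $u$ of (\ref{101}) would have to grow too fast. Suppose for contradiction that $u \in C^{2}(\mathbb{R}^{n}\setminus\{0\}) \cap \Phi^{k}(\mathbb{R}^{n})$ is a positive solution. Since $u$ is generalized $k$-convex, in particular $\sigma_{1}(\lambda(D_i(A(|Du|)D_j u))) = \sum_i D_i(A(|Du|)D_i u) > 0$, i.e. $\mathrm{div}(A(|Du|)Du) > 0$ everywhere. The first step is to radialize: using the divergence structure and integrating over balls $B_r$, one obtains for the spherical average (or, after a comparison argument, for $u$ itself along rays) a differential inequality. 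Write $g(p) = pA(p)$, which by (\ref{103}) is continuous, strictly increasing on $(0,\infty)$, and by (\ref{114}) satisfies $g(p) \to L < \infty$ as $p \to \infty$. Then $\mathrm{div}(A(|Du|)Du) > 0$ gives, after integration, $r^{n-1} g(|\bar u'(r)|) $ is monotone in $r$ (more precisely $\frac{d}{dr}\left(r^{n-1} g(\bar u'(r))\right) \ge 0$ for the radial profile), so $g(\bar u'(r)) \ge c\, r^{1-n}$ for large $r$ once it is positive somewhere.

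The key obstruction is then immediate: since $g$ is bounded above by $L$, we cannot have $g(\bar u'(r)) \ge c\, r^{1-n}$ fail—rather, the right statement is that $\bar u'(r)$ is forced to stay bounded (because $g$ only takes values in $[0,L)$, the pre-image constraint $g(\bar u'(r)) < L$ caps $\bar u'(r)$), hence $\bar u(r)$ grows at most linearly, $\bar u(r) \le C(1+r)$. Now I bring in $\sigma_k$ and $f$. Using the standard inequality $\sigma_k^{1/k}(\lambda) \le c_{n,k}\, \sigma_1(\lambda)$ for $\lambda \in \Gamma_k$ (valid on the positive cone, this is Maclaurin/Newton), the hypothesis (\ref{101}) yields $\sigma_1(\lambda(D_i(A(|Du|)D_j u))) \ge c\, f(u) \ge c\, f(\bar u)$ along the radial direction after averaging and using monotonicity (\ref{109}) of $f$. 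So $\mathrm{div}(A(|Du|)Du) \ge c f(u)$ in an averaged sense, giving $\frac{d}{dr}\left(r^{n-1} g(\bar u'(r))\right) \ge c\, r^{n-1} f(\bar u(r))$. Integrating once more and using $g \le L$: $L\, r^{n-1} \ge r^{n-1} g(\bar u'(r)) \ge c \int_0^r s^{n-1} f(\bar u(s))\, ds$, so $\int_0^r s^{n-1} f(\bar u(s))\, ds \le C r^{n-1}$, which forces $f(\bar u(r))$ to stay bounded, hence (since $f > 0$ and $\bar u$ increasing) $\bar u$ bounded or at worst...

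Here is the cleaner route I would actually carry out: from $L r^{n-1} \ge c \int_0^r s^{n-1} f(\bar u(s)) ds \ge c f(\bar u(r/2)) \int_{r/2}^r s^{n-1} ds \ge c' r^n f(\bar u(r/2))$, we get $f(\bar u(r/2)) \le C/r \to 0$ as $r \to \infty$. But $f$ is positive and $\bar u$ is non-decreasing (its derivative has a sign forced by $\sigma_1 > 0$ plus positivity of $u$, or one takes the spherical maximum), so $f(\bar u(r/2))$ is bounded below by a positive constant for large $r$—contradiction. The main obstacle, and the place requiring care, is the very first reduction: passing rigorously from the PDE inequality for the $C^2$ function $u$ on $\mathbb{R}^{n}\setminus\{0\}$ to an ODE inequality for a scalar radial profile. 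One must either (i) use that $\sigma_k(\lambda) \le \binom{n}{k} (\sigma_1(\lambda)/n)^k$ only in one direction and combine with an integral/comparison argument à la Naito–Usami for $k=1$, or (ii) invoke a symmetrization that respects the $\Gamma_k$ cone condition. I would follow the Naito–Usami template, replacing their $\sigma_1$ estimate by the Maclaurin inequality to handle general $k$, and the only genuinely new input is verifying that (\ref{114}) caps the flux $g(\bar u')$ uniformly, which is where the hypothesis does all the work.
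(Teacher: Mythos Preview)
Your proposal has a genuine gap at exactly the point you flag as ``the main obstacle'': the passage from the PDE on $u$ to an ODE-type inequality for a radial profile. The averaging route does not work here. From the Maclaurin inequality you correctly obtain $\operatorname{div}(A(|Du|)Du)\ge c\,f(u)$, and the divergence theorem then gives
\[
\int_{\partial B_r} A(|Du|)\,D_\nu u\,dS \;\ge\; c\int_{B_r} f(u)\,dx,
\]
with the left side bounded by $L\,|\partial B_r|$. But your lower bound on the right side is not justified. The flux inequality above does \emph{not} imply that the spherical mean $\bar u$ is non-decreasing: what is positive is the flux of $A(|Du|)Du$, not of $Du$, and the nonlinearity of $A$ destroys any mean-value monotonicity. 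Even if $\bar u$ were non-decreasing, the step $\int_{B_r\setminus B_{r/2}} f(u)\,dx \ge c\,r^n f(\bar u(r/2))$ would still require $u(x)\ge \bar u(r/2)$ pointwise for $|x|\ge r/2$ (or Jensen with $f$ convex), neither of which you have. Switching to the spherical maximum $M(r)$ makes $M$ non-decreasing trivially, but then the integral lower bound evaporates. The Naito--Usami argument for $k=1$ that you cite does not proceed by averaging either; it also goes through comparison with radial solutions.

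The paper's proof avoids this entirely by a comparison-principle reduction. The key lemma (Lemma~\ref{306} via Lemma~\ref{305}) shows that the existence of a positive entire solution of (\ref{101}) is \emph{equivalent} to the existence of a global positive solution $\varphi$ of the radial Cauchy problem (\ref{204}). Once one has such a $\varphi$ on $[0,\infty)$, the ODE analysis is elementary: from (\ref{204}) and (\ref{203}) one derives
\[
A(\varphi'(r))\varphi'(r)\;\ge\;\Big(\tfrac{1}{C_n^k}\Big)^{1/k} r\,f(a),\qquad r>0,
\]
which blows up as $r\to\infty$ and contradicts $pA(p)\to L<\infty$. The substantive work you are missing is precisely the comparison lemma that transfers the question to the ODE; your Maclaurin-and-integrate strategy, as written, cannot replace it.
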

 \begin{remark}
	The k-mean curvature innequality (\ref{101}) for the case $ A(p)=\left(1+p^{2}\right)^{-\frac{1}{2}}$ satisfies the Theorem \ref{102}, and the corresponding results were obtained by Cheng and Yau \cite{r11} and Tkachev  \cite{r12}.
\end{remark}
Next we consider the case 
\begin{equation}\label{116}
	 \lim _{p \rightarrow \infty} p A(p)=\infty.
\end{equation}
 We now define a continuous function $\Psi:[0, \infty) \rightarrow[0, \infty)$ by
  \begin{equation}\label{107}
  	 \Psi(p)=p \left(p A\left(p\right)\right)^k-\int_{0}^{p} \left(t A(t)\right)^k d t, \ p \geq 0.
  \end{equation}
  It follows from the condition (\ref{103}) that the inverse function of $\Psi$ on $[0,\infty)$ exists, denoted by $\Psi^{-1}$. For example, if $A(p)=p^{m-2},\ m>1$,    then 
  $$
  \Psi(p)=\frac{m-1}{m} p^{m} \ \text { and } \ \Psi^{-1}(p)=\left(\frac{m}{m-1}p\right)^{\frac{1}{m}}.
  $$
  A necessary and sufficient result is as follows.
 
 \begin{theorem}\label{115}
 	Assume that $A$  satisfies (\ref{103}), (\ref{116}) and $f$ satisfies (\ref{109}),
 	then inequality (\ref{101}) has  a positive solution $u\in C^{2}\left(\mathbb{R}^{n} \backslash\{0\}\right) \cap \Phi^{k}\left(\mathbb{R}^{n}\right)$
 	if and only if
 	\begin{equation}\label{110}
\int^{\infty}\left(\Psi^{-1}\left(\int^{s} f^k(t) d t\right)\right)^{-1} d s=\infty.
\end{equation}
 \end{theorem}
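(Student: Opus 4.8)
The plan is to reduce the PDE problem to a radial ODE analysis, exploiting the fact that for a radial function $u(x) = \varphi(r)$ with $r = |x|$, the eigenvalues of the matrix $(D_i(A(|Du|)D_j u))$ are explicitly computable: one eigenvalue equals $(A(|\varphi'|)\varphi')'$ (the radial direction) and the remaining $n-1$ equal $A(|\varphi'|)\varphi'/r$ (the spherical directions). Consequently $\sigma_k^{1/k}(\lambda) = c_{n,k}\,\big((A(\varphi')\varphi')'\big)^{1/k}\big(A(\varphi')\varphi'/r\big)^{(k-1)/k}$ or a similar combination (up to a dimensional constant and lower-order mixed terms), so that \eqref{101} for radial increasing $u$ becomes an ODE inequality of the form
\begin{equation*}
\big(r^{n-k}\,(rA(\varphi')\varphi')^{\,k}/k\big)' \;\gtrsim\; c\, r^{n-1} f(\varphi)^k ,
\end{equation*}
or rather the corresponding \emph{equation}, which after one integration reads $r^{n-k}(rA(\varphi')\varphi')^k \sim \int_0^r c\,s^{n-1} f(\varphi(s))^k\,ds$. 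The role of $\Psi$ in \eqref{107} is exactly to invert the relation between $\varphi'$ and the quantity $p(pA(p))^k - \int_0^p (tA(t))^k\,dt$ that arises after this integration, so that one can solve for $\varphi'$ and obtain a first-order autonomous-type ODE for $\varphi$.

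For the \textbf{sufficiency} direction (assuming \eqref{110}), I would construct an explicit radial supersolution — equivalently, solve the radial ODE with equality. Set $F(s) = \int_{u_0}^s f^k(t)\,dt$; the integrated ODE suggests that $\varphi$ should satisfy, roughly, $rA(\varphi')\varphi' = \text{(something involving }\Psi^{-1}\circ F)$, leading to $\varphi'/\Psi^{-1}(C F(\varphi)) $ being controlled, and hence
\begin{equation*}
\int_{u_0}^{\varphi(r)} \frac{dt}{\Psi^{-1}(C F(t))} \;\asymp\; r .
\end{equation*}
Condition \eqref{110} is precisely what guarantees the left side tends to $\infty$, so $\varphi$ is defined for all $r\in[0,\infty)$, giving a global solution. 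The technical points to check are: (i) the solution lies in $\Phi^k(\mathbb{R}^n)$, i.e. the eigenvalue vector stays in $\Gamma_k$ — this follows because $\varphi$ is chosen convex and increasing, making all $n$ radial eigenvalues positive; (ii) regularity $C^2(\mathbb{R}^n\setminus\{0\})\cap C^1(\mathbb{R}^n)$, handled by the assumption \eqref{103} that $pA(p)$ is a homeomorphism of $[0,\infty)$ and by choosing the initial data so that $\varphi'(0)=0$; (iii) that the lower-order spherical terms neglected above only help (they are nonnegative for convex increasing $\varphi$), so an exact solution of the simplified ODE is a genuine solution of \eqref{101}. I would set up the construction as solving an integral equation and invoke a standard fixed-point / monotone iteration argument on $[0,R]$ for each $R$, then let $R\to\infty$ using \eqref{110}.

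For the \textbf{necessity} direction (assuming a positive solution $u$ exists, deduce \eqref{110}), the standard device is spherical averaging: set $\bar u(r) = \fint_{\partial B_r} u$ or work with $\min_{\partial B_r} u$. Using the concavity of $\sigma_k^{1/k}$ on $\Gamma_k$ (Newton--Maclaurin / Gårding), Jensen's inequality, and the divergence structure of $D_i(A(|Du|)D_j u)$, one shows that $\bar u$ (or a comparable radial quantity) is a subsolution of the radial ODE inequality above with $f$ replaced by $f(\bar u)$ — here one uses \eqref{109}, monotonicity of $f$. Then a comparison argument forces the radial subsolution to blow up at some finite radius unless the integral $\int^\infty (\Psi^{-1}(\int^s f^k))^{-1}\,ds$ diverges; contrapositively, if \eqref{110} fails, every such radial function reaches $+\infty$ in finite $r$, contradicting that $u$ is entire and finite. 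I expect the main obstacle to be step (ii)-type issues in the \emph{necessity} part: justifying rigorously that the spherical average (which need not be smooth, and for which one only has inequalities, not the ODE) can be compared against the model ODE — this requires care with the direction of inequalities after applying Jensen to the concave function $\sigma_k^{1/k}$ and with the fact that $A(|Du|)Du$ rather than $Du$ appears inside the divergence, so that the natural monotone quantity is $r \mapsto r^{n-1} A(|\bar u'|)\bar u'$ and one must show it is monotone and controls the forcing term from below. The case $k=1$ of Naito--Usami \cite{r10} and the case $A\equiv 1$ of Ji--Bao \cite{r13} provide the template, and the bookkeeping is to merge the two: track the $k$-th powers coming from $\sigma_k^{1/k}$ through the same estimates that \cite{r10} does for $k=1$.
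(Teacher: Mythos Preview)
Your sufficiency direction is essentially what the paper does: one shows (via a local existence argument for the radial Cauchy problem and an ODE estimate) that condition \eqref{110} forces the radial solution $\varphi$ of the equation
\[
\frac{C_n^k}{n}\,r^{1-n}\Big(r^{n-k}\big(A(\varphi')\varphi'\big)^k\Big)' = f^k(\varphi),\qquad \varphi(0)=a,\ \varphi'(0)=0,
\]
to exist for all $r\geq 0$, and then $u(x)=\varphi(|x|)$ is the required solution in $C^2(\R^n\setminus\{0\})\cap\Phi^k(\R^n)$. The paper carries this out by contradiction: if the maximal interval is $[0,R)$ with $R<\infty$, then $\varphi(R^-)=\infty$, and multiplying the ODE by $\varphi'$ and integrating yields $\int^\infty (\Psi^{-1}(\cdots))^{-1}\,ds \leq R<\infty$, contradicting \eqref{110}.

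Your necessity direction, however, is \emph{not} what the paper does, and the route you sketch has a real obstruction. Spherical averaging plus Jensen is unlikely to work here: the operator is $\sigma_k^{1/k}$ applied to the matrix $D_i\big(A(|Du|)D_ju\big)$, which already depends nonlinearly on $Du$ inside the derivative. Even for $k=1$, the divergence theorem gives $\int_{\partial B_r} A(|Du|)\,\partial_\nu u \geq \int_{B_r} f(u)$, but the left side involves $|Du|$ pointwise, not $|\bar u'|$, so you cannot read off an ODE for $\bar u$ without an additional inequality relating $A(|Du|)\partial_\nu u$ to $A(|\bar u'|)\bar u'$; for general $A$ no such one-sided bound is available. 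For $k\geq 2$ the situation is worse: concavity of $\sigma_k^{1/k}$ on $\Gamma_k$ lets you average the \emph{matrix}, but the spherical average of $D_i(A(|Du|)D_ju)$ bears no obvious relation to the corresponding expression for $\bar u$, and need not even stay in $\Gamma_k$.

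The paper sidesteps all of this with a comparison principle. The key lemma is: if $\varphi$ is a radial solution of the \emph{equation} on $[0,R)$ with $\varphi(r)\to\infty$ as $r\to R$, then every positive $u\in C^2(\R^n\setminus\{0\})\cap\Phi^k(\R^n)$ solving \eqref{101} satisfies $u(x)\leq\varphi(|x|)$ on $B_R$ (proved by a standard touching argument using ellipticity of $\sigma_k^{1/k}$ on $\Gamma_k$ and monotonicity of $f$). This immediately gives the reduction: \eqref{101} has a positive entire solution if and only if the radial Cauchy problem has a global solution for some $a>0$. Indeed, if no global radial solution exists, then for each $a>0$ the solution $\varphi_a$ blows up at some finite $R_a$, the comparison lemma forces $u(0)\leq\varphi_a(0)=a$, and taking $a=u(0)/2$ is a contradiction. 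Once reduced to the ODE, the necessity of \eqref{110} follows from the same multiply-by-$\varphi'$-and-integrate computation as in sufficiency, with the inequalities reversed. You should replace your averaging strategy with this comparison-with-blow-up-barriers argument.
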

For $k=1, A(p)=1$, (\ref{110}) is exactly  the  Keller-Osserman condition (\ref{111}).  So we can regard (\ref{110})  as a generalized Keller-Osserman condition.

If we strengthen the case (\ref{116}) to
 \begin{equation}\label{108}
 	 0<\liminf _{p \rightarrow \infty} \frac{ A(p)}{p^{m-2}} \leq \limsup _{p \rightarrow \infty} \frac{ A(p)}{p^{m-2}}<\infty \ \text { for some } \ m>1.
 \end{equation}
 
 As a consequence of Theorems \ref{115}, we obtain the following corollary.
 \begin{corollary}\label{113}
 	Assume that $A$  satisfies (\ref{103}),  (\ref{108}) and $f$ satisfies (\ref{109}), then inequality (\ref{101}) has a positive  solution $u\in C^{2}\left(\mathbb{R}^{n} \backslash\{0\}\right) \cap \Phi^{k}\left(\mathbb{R}^{n}\right)$ if and only if
 	\begin{equation}\label{112}
 	\int^{\infty}\left(\int^{s} f^k(t) d t\right)^{-\frac{1}{k(m-1)+1}} d s=\infty.
 	\end{equation}
 \end{corollary}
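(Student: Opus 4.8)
The plan is to deduce Corollary~\ref{113} from Theorem~\ref{115}, so I must check two things: that (\ref{108}) implies (\ref{116}), making Theorem~\ref{115} applicable, and that under (\ref{108}) the condition (\ref{110}) is equivalent to (\ref{112}). The first is immediate: writing $c_1=\liminf_{p\to\infty}A(p)/p^{m-2}>0$, for a fixed $\varepsilon\in(0,c_1)$ there is $P$ with $A(p)\ge(c_1-\varepsilon)p^{m-2}$ for $p\ge P$, hence $pA(p)\ge(c_1-\varepsilon)p^{m-1}\to\infty$ since $m>1$, which is (\ref{116}). It remains to compare the two integral conditions.

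Write $g(p)=pA(p)$, which by (\ref{103}) is continuous and nondecreasing on $[0,\infty)$ with $g\ge0$, and rewrite (\ref{107}) as $\Psi(p)=\int_0^p\big(g(p)^k-g(t)^k\big)\,dt$. The key step is a two-sided power estimate: there are constants $0<a\le b<\infty$ and $P_0\ge1$ with
\begin{equation*}
a\,p^{k(m-1)+1}\le\Psi(p)\le b\,p^{k(m-1)+1}\qquad(p\ge P_0).
\end{equation*}
The upper bound follows at once from $\Psi(p)\le p\,g(p)^k$ together with $g(p)\le(c_2+\varepsilon)p^{m-1}$ for large $p$, where $c_2=\limsup_{p\to\infty}A(p)/p^{m-2}<\infty$. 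For the lower bound one restricts the integral to a thin slab and uses monotonicity of $g$:
\begin{equation*}
\Psi(p)\ge\int_0^{\delta p}\big(g(p)^k-g(t)^k\big)\,dt\ge\delta p\,\big(g(p)^k-g(\delta p)^k\big);
\end{equation*}
applying $g(p)\ge(c_1-\varepsilon)p^{m-1}$ and $g(\delta p)\le(c_2+\varepsilon)(\delta p)^{m-1}$ bounds the last bracket below by $p^{k(m-1)}\big[(c_1-\varepsilon)^k-(c_2+\varepsilon)^k\delta^{k(m-1)}\big]$, and since $k(m-1)>0$ one may first fix $\delta\in(0,1)$ small (depending only on $c_1,c_2,\varepsilon,k,m$) so that this is $\ge\tfrac12(c_1-\varepsilon)^kp^{k(m-1)}$, whence $\Psi(p)\ge\tfrac12\delta(c_1-\varepsilon)^kp^{k(m-1)+1}$ for $p$ large. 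I expect this lower bound to be the only delicate point: the naive choice $\delta=\tfrac12$ fails when $c_2/c_1$ is large, so $\delta$ must be chosen small beforehand.

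Since $\Psi$ is a continuous increasing bijection of $[0,\infty)$ with $\Psi(0)=0$ (the inverse exists by the remark after (\ref{107}), and surjectivity onto $[0,\infty)$ follows from the lower bound above), the displayed estimate inverts, for some $Q_0>0$, to
\begin{equation*}
b^{-\frac{1}{k(m-1)+1}}q^{\frac{1}{k(m-1)+1}}\le\Psi^{-1}(q)\le a^{-\frac{1}{k(m-1)+1}}q^{\frac{1}{k(m-1)+1}}\qquad(q\ge Q_0).
\end{equation*}
Now set $F(s)=\int^s f^k(t)\,dt$, which is nondecreasing by (\ref{109}). If $F$ is bounded, then by monotonicity of $\Psi^{-1}$ both integrands in (\ref{110}) and (\ref{112}) are bounded below by positive constants, so both integrals equal $+\infty$ and the equivalence is trivial. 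If $F(s)\to\infty$, then $F(s)\ge Q_0$ for $s$ large, and the inversion estimate yields $c^{-1}F(s)^{-\frac{1}{k(m-1)+1}}\le(\Psi^{-1}(F(s)))^{-1}\le c\,F(s)^{-\frac{1}{k(m-1)+1}}$ for a constant $c\ge1$; by comparison, $\int^\infty(\Psi^{-1}(F(s)))^{-1}\,ds$ and $\int^\infty F(s)^{-\frac{1}{k(m-1)+1}}\,ds$ converge or diverge together, i.e.\ (\ref{110})$\iff$(\ref{112}). Invoking Theorem~\ref{115} then completes the proof.
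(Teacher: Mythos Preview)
Your proof is correct and follows essentially the same route as the paper: the paper packages the two-sided estimate $\Psi(p)\asymp p^{k(m-1)+1}$ (and hence $\Psi^{-1}(q)\asymp q^{1/(k(m-1)+1)}$) into a separate Lemma~\ref{307}, proved by the same splitting-at-$\theta p$ trick you use, and then invokes Theorem~\ref{115}. Your treatment is slightly more detailed (you explicitly verify (\ref{108})$\Rightarrow$(\ref{116}) and spell out the integral comparison, including the case distinction on $F$, which is in fact vacuous since $f>0$ nondecreasing forces $F(s)\to\infty$), but the argument is the same.
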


	\begin{remark}
		 Corollary \ref{113} holds for the cases $A(p)=1$, $m=2$ which was obtained by Ji and Bao \cite{r13};
		$A(p)=p^{m-2}$,  $m>1$ which was obtained by Feng and Bao \cite{r14};
		$A(p)=\left(1+p^{2}\right)^{-\alpha}$, $ m=2-2\alpha>1$ and
		 $A(p)=p^{2 m-2}\left(1+p^{2 m}\right)^{-\frac{1}{2}}$, $m > 1$, which are first obtained by  authors of this paper.

	\end{remark}

	\begin{remark}
		Under the assumption of Corollary \ref{113}, if $f(u)=u^{\gamma}$, $\gamma\geq0$, then the inequality (\ref{101})
		has a positive solution $ u\in C^{2}\left(\mathbb{R}^{n} \backslash\{0\}\right) \cap \Phi^{k}\left(\mathbb{R}^{n}\right)$ if and only if $\gamma \leq m-1$.
	\end{remark}

If we strengthen the requirement of $f$ from (\ref{109}) to the positive function $f\in C(\mathbb{R})$  satisfying 
\begin{equation}\label{117}
f \text { is  monotone non-decreasing  in } \mathbb{R} \text {, }
\end{equation}
then we have similary following corollary:
\begin{corollary}\label{118}
		Assume that $A$  satisfies (\ref{103}) and $f$ satisfies (\ref{117}).
		 If (\ref{114}) holds, then the inequality (\ref{101}) has no  solution  $u\in C^{2}\left(\mathbb{R}^{n} \backslash\{0\}\right) \cap \Phi^{k}\left(\mathbb{R}^{n}\right)$;
		if (\ref{116}) holds,
		 then the inequality (\ref{101}) has a  solution $u \in C^{2}\left(\mathbb{R}^{n} \backslash\{0\}\right) \cap \Phi^{k}\left(\mathbb{R}^{n}\right)$ if and only if (\ref{110}) holds,
	in particular, if  (\ref{108}) holds, then the inequality (\ref{101}) has a  solution $u \in C^{2}\left(\mathbb{R}^{n} \backslash\{0\}\right) \cap \Phi^{k}\left(\mathbb{R}^{n}\right)$ if and only if (\ref{112}) holds.
\end{corollary}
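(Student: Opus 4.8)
The plan is to reduce Corollary~\ref{118} to Theorem~\ref{102}, Theorem~\ref{115} and Corollary~\ref{113}. Relative to those results, Corollary~\ref{118} (i)~drops the requirement that the solution $u$ be positive, and (ii)~asks $f$ to be positive and nondecreasing on all of $\mathbb{R}$. Observation (ii) is exactly what guarantees that $f(u)$, and $f(u+c)$ for any constant $c$, is defined for every $u\in C^{2}(\mathbb{R}^{n}\backslash\{0\})\cap\Phi^{k}(\mathbb{R}^{n})$; observation (i) is what makes a vertical translation $u\mapsto u+c$ available as a reduction device. Throughout I use that the left-hand side of (\ref{110}) (and of (\ref{112})) depends only on the values of $f$ on $(0,\infty)$, the omitted lower limits being arbitrary positive numbers, and on $\Psi$, hence on $A$.

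First the two ``if'' implications, which are immediate. If (\ref{116}) and (\ref{110}) hold, then $f|_{(0,\infty)}$ is a positive continuous function satisfying (\ref{109}) for which (\ref{110}) holds, so Theorem~\ref{115} produces a \emph{positive} solution $u\in C^{2}(\mathbb{R}^{n}\backslash\{0\})\cap\Phi^{k}(\mathbb{R}^{n})$ of (\ref{101}); a positive solution is in particular a solution in the sense of the present corollary. If (\ref{108}) and (\ref{112}) hold, one argues identically with Corollary~\ref{113} in place of Theorem~\ref{115}.

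Now the nonexistence and ``only if'' implications. Suppose $u\in C^{2}(\mathbb{R}^{n}\backslash\{0\})\cap\Phi^{k}(\mathbb{R}^{n})$ solves (\ref{101}); I must derive a contradiction when (\ref{114}) holds and derive (\ref{110}) when (\ref{116}) holds. For $c\in\mathbb{R}$ set $v:=u+c$ and $g(t):=f(t-c)$. Since $Dv=Du$, the function $v$ lies in $C^{2}(\mathbb{R}^{n}\backslash\{0\})\cap\Phi^{k}(\mathbb{R}^{n})$, $g$ is positive, continuous and nondecreasing on $\mathbb{R}$, and $\sigma_{k}^{1/k}(\lambda(D_i(A(|Dv|)D_j v)))=\sigma_{k}^{1/k}(\lambda(D_i(A(|Du|)D_j u)))\ge f(u)=g(v)$; that is, $v$ solves (\ref{101}) with $f$ replaced by $g$. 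One would like to take $c$ so large that $v>0$ everywhere and then quote Theorem~\ref{102} or Theorem~\ref{115} directly, but this is impossible in general, since a solution of (\ref{101}) need not be bounded below. The resolution is that the nonexistence arguments behind Theorems~\ref{102} and~\ref{115} do not use pointwise positivity of the solution; they use only that it is a $C^{1}$ function, associating with it a suitable nondecreasing radial function $\phi\in C[0,\infty)$, finite for every $r\ge0$, which satisfies a first-order radial differential inequality driven by $f(\phi)$, then excluding the case that $\phi$ stays bounded and, when $\phi(r)\to\infty$, integrating the inequality. Applying this to $v$ and $g$, with $c$ chosen large enough that the associated radial function is $\ge1$ (so that all manipulations of Theorem~\ref{115} remain valid), yields either the required contradiction when (\ref{114}) holds, or
\[
\int^{\infty}\left(\Psi^{-1}\left(\int^{s}g^{k}(t)\,dt\right)\right)^{-1}ds=\infty
\]
when (\ref{116}) holds. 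Finally, $\int^{s}g^{k}(t)\,dt=\int^{s-c}f^{k}(\tau)\,d\tau\to\infty$ as $s\to\infty$ because $f$ is positive and nondecreasing; combining the change of variable $s\mapsto s+c$ with the elementary fact that altering $\int^{s}f^{k}$ by a bounded amount does not change the convergence of $\int^{\infty}(\Psi^{-1}(\cdot))^{-1}\,ds$ (only the tail matters, and there the perturbation is negligible, $\Psi^{-1}$ being increasing), one gets that (\ref{110}) holds for $g$ if and only if it holds for $f$. The ``in particular'' clause for (\ref{108}) follows the same way, or by specializing (\ref{110}) to (\ref{112}) exactly as in Corollary~\ref{113}.

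The step I expect to be the real obstacle is this reduction to the positive-solution case when the solution is not bounded below: one has to confirm that the proofs of Theorems~\ref{102} and~\ref{115} genuinely localize to a radial quantity attached to $u$ that is automatically bounded below, hence positive after a harmless shift, and that the Keller-Osserman integral (\ref{110}) is insensitive both to a translation of the integration variable and to a bounded additive perturbation of $\int^{s}f^{k}$. Everything else is routine bookkeeping.
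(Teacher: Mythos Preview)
Your handling of the ``if'' directions is correct: Theorem~\ref{115} and Corollary~\ref{113} already produce positive solutions, which are solutions in the sense of Corollary~\ref{118}.

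For the nonexistence and ``only if'' directions, your translation device $v=u+c$, $g=f(\cdot-c)$ stalls, as you acknowledge, because $u$ need not be bounded below. Your fallback---that the nonexistence proofs do not really use pointwise positivity of the solution---is the right idea, and it is exactly what the paper does. However, your description of those proofs is inaccurate, and this sets you up to verify the wrong thing. You write that the arguments ``associate with [the solution] a suitable nondecreasing radial function~$\phi$'' and plan to check that this $\phi$ is automatically bounded below so that a shift makes it positive. In the paper the radial function $\varphi$ is \emph{not} extracted from $u$; it is the solution of the Cauchy problem~(\ref{204}) with a freely chosen initial value $a$, constructed independently of $u$. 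The only link to $u$ is the comparison Lemma~\ref{305}, which gives $u(0)\le\varphi(0)=a$, and Lemma~\ref{306} then reaches a contradiction by choosing $a<u(0)$. The positivity hypotheses on $u$, $\varphi$, and $a$ in Lemmas~\ref{206}, \ref{219}, \ref{305}, \ref{306} serve only to keep $\varphi(s)$ inside the domain of $f$. Once $f$ satisfies~(\ref{117}), one may take $a$ to be any real number (for instance $a=u(0)-1$ in place of $a=u(0)/2$), and every step of those lemmas and of the proofs of Theorem~\ref{102} and Lemmas~\ref{104},~\ref{105} goes through verbatim.

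That is the whole content of the paper's proof: a one-line remark that the positivity of $a$ in Lemmas~\ref{206}, \ref{219} and~\ref{306} is unnecessary under~(\ref{117}). No translation of $u$ or of $f$ is involved, so your analysis of whether~(\ref{110}) is invariant under $f\mapsto f(\cdot-c)$ is superfluous, and the ``real obstacle'' you anticipate---a radial quantity attached to $u$ that must be shown bounded below---does not actually arise.
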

\begin{remark}
	 Under the assumption of Corollary \ref{118}, if $f(u)=e^{u}$, then the inequality (\ref{101}) has no solution $u\in C^{2}\left(\mathbb{R}^{n} \backslash\{0\}\right) \cap \Phi^{k}\left(\mathbb{R}^{n}\right)$.
\end{remark}
 In particular, we will get  a better regularity of solutions later about that if 
$A\in C^1[0,\infty), A(0)\neq 0$, then $u\in C^{2}\left(\mathbb{R}^{n}\right) \cap \Phi^{k}\left(\mathbb{R}^{n}\right)$;
otherwise we consider the case 
\begin{equation}\label{211}
0<\liminf _{p \rightarrow 0} \frac{A(p)}{p^{l-2}} \leq \limsup _{p \rightarrow 0} \frac{ A(p)}{p^{l-2}}<\infty \ \text { for some } \ l> 2,
\end{equation}
then $u\in W_{loc}^{2, n q}\left(\mathbb{R}^{n}\right),1<q<\frac{l-1}{l-2}$, by embedding theorem, we have $u\in C^{1, \alpha}\left(\mathbb{R}^{n}\right) \cap \Phi^{k}\left(\mathbb{R}^{n}\right)$ for some $\alpha \in(0,1)$. 

The rest of our paper is organized as follows. In Section 2, we introduce some results on radial solutions and the local existence of Cauchy problem  associated to (\ref{101}) as preliminaries. In Section 3, we  give the comparison principle and prove Theorems $\ref{102}$, $\ref{115}$ and Corollary $\ref{113}$, \ref{118}.
\section{Preliminary results on radial solutions}
We need some properties of radial functions in the proof of the main theorem. For $R>0$, let $B_{R}:=\left\{x \in \mathbb{R}^{n}:|x|<R\right\}$.
	\begin{lemma}\label{206}
	For any positive number a, assume $\varphi(r) \in C[0, R) \cap C^{1}(0, R)$ is a positive solution of the Cauchy problem of the implicit equation
	\begin{equation}\label{204}
	\left\{\begin{aligned}
		&A\left(|\varphi^{\prime}(r)|\right)\varphi^{\prime}(r)
	=\left(\frac{nr^{k-n}}{C_{n}^k} \int_{0}^{r} s^{n-1} f^{k}(\varphi(s)) d s\right)^{\frac{1}{k}}=:F(r,\varphi), \ r>0 , \\
	&\varphi(0) =a.
	\end{aligned}\right.
	\end{equation}
	Then $\varphi^{\prime}(0)=0$, $\varphi^{\prime}(r)>0$ in $(0, R)$, and it satisfies $\varphi(r)\in C^{1}[0, R) \cap C^{2}(0, R) $  with $A\left(\varphi^{\prime}\left(r\right)\right)\varphi^{\prime}\left(r\right)\in C^{1}[0, R)$,  and the ordinary differential equation
	\begin{equation}\label{203}
	\begin{aligned}
	&C_{n-1}^{k-1} \left(A\left(\varphi^{\prime}(r)\right)\varphi^{\prime}(r)\right)'\left(\frac{A\left(\varphi^{\prime}(r)\right)\varphi^{\prime}(r)}{r}\right)^{k-1}
	+C_{n-1}^k\left(\frac{A\left(\varphi^{\prime}(r)\right)\varphi^{\prime}(r)}{r}\right)^{k}\\
	&=\frac{C_{n}^{k}r^{1-n}}{n}\left(r^{n-k}\left(A\left(\varphi^{\prime}(r)\right)\varphi^{\prime}(r)\right)^k\right)'=f^{k}(\varphi(r)).
	\end{aligned}
	\end{equation}
\end{lemma}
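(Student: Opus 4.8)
The plan is to extract the monotonicity and regularity of $\varphi$ directly from the integral identity in (\ref{204}) and then differentiate that identity to recover (\ref{203}). Set $\Theta(p):=pA(p)$; by (\ref{103}) this is a continuous, strictly increasing bijection of $[0,\infty)$ onto its range, with $\Theta(0)=0$, so $\Theta^{-1}$ is continuous and $\Theta^{-1}(0)=0$. Put $H(r):=\int_0^r s^{n-1}f^k(\varphi(s))\,ds$ and $w(r):=nr^{-n}H(r)$ for $r>0$. Since $\varphi>0$ and $f\in C(0,\infty)$, the integrand of $H$ is continuous on $[0,R)$, so $H\in C^1[0,R)$ with $H'(r)=r^{n-1}f^k(\varphi(r))$, and a comparison of $H(r)$ with $\tfrac1n f^k(a)r^n$ near the origin (using continuity of $f$ at $a>0$) shows that $w$ extends continuously to $[0,R)$ with $w(0)=f^k(a)>0$ and $w>0$ everywhere. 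A one-line computation rewrites the right-hand side of (\ref{204}) as $F(r,\varphi)=(C_n^k)^{-1/k}\,r\,w(r)^{1/k}=:g(r)$, so $g\in C[0,R)$, $g(0)=0$, and $g>0$ on $(0,R)$.

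Two of the claims are then immediate. The left-hand side of (\ref{204}) is $A(|\varphi'(r)|)\varphi'(r)$, which has the sign of $\varphi'(r)$ and equals $g(r)>0$ on $(0,R)$; hence $\varphi'>0$ there, so $A(|\varphi'|)\varphi'=\Theta(\varphi')$ and $\Theta(\varphi'(r))=g(r)\to0$ as $r\to0^+$. Continuity of $\Theta^{-1}$ at $0$ then gives $\varphi'(r)\to0$, and the standard mean value theorem argument upgrades this to: $\varphi$ is differentiable at $0$ with $\varphi'(0)=0$ and $\varphi'\in C[0,R)$, i.e. $\varphi\in C^1[0,R)$.

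Next I would handle the regularity of $g=A(\varphi')\varphi'$ across $r=0$. On $(0,R)$ one has $r w'(r)=n\big(f^k(\varphi(r))-w(r)\big)$, whence
$$g'(r)=(C_n^k)^{-1/k}\Big[w(r)^{1/k}+\tfrac nk\,w(r)^{1/k-1}\big(f^k(\varphi(r))-w(r)\big)\Big],$$
and as $r\to0^+$ the bracket tends to $f(a)>0$, since $w(r)\to f^k(a)$, $f^k(\varphi(r))-w(r)\to0$, and $w(r)^{1/k-1}$ stays bounded. Thus $g'$ extends continuously to $[0,R)$ and $g\in C^1[0,R)$. On $(0,R)$, where $\varphi'>0$ and $\Theta$ is $C^1$ and strictly increasing, the relation $\Theta(\varphi')=g$ together with the inverse function theorem gives $\varphi'=\Theta^{-1}(g)\in C^1(0,R)$, i.e. $\varphi\in C^2(0,R)$.

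Finally, for (\ref{203}) I would recast (\ref{204}) as $r^{n-k}g(r)^k=\tfrac n{C_n^k}H(r)$; both sides are $C^1$ on $[0,R)$, so differentiating yields $\big(r^{n-k}g^k\big)'=\tfrac n{C_n^k}r^{n-1}f^k(\varphi)$, and multiplying by $\tfrac{C_n^k}{n}r^{1-n}$ gives the second equality in (\ref{203}). Expanding $\big(r^{n-k}g^k\big)'=(n-k)r^{n-k-1}g^k+k r^{n-k}g^{k-1}g'$ and using the binomial identities $\tfrac{n-k}{n}C_n^k=C_{n-1}^k$ and $\tfrac kn C_n^k=C_{n-1}^{k-1}$, then substituting $g=A(\varphi')\varphi'$, produces the first equality in (\ref{203}). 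I expect the delicate point to be the regularity rather than the algebra: one must use the weight $r^{k-n}$ in $F(r,\varphi)$ carefully to see that it is $A(\varphi')\varphi'$, not $\varphi'$ itself, that is $C^1$ up to the origin, and the passage from $\Theta(\varphi')=g$ to $\varphi'\in C^1(0,R)$ is exactly where (\ref{103}) and $A\in C^1(0,\infty)$ are needed, to know that $pA(p)$ behaves as a $C^1$ diffeomorphism along the solution.
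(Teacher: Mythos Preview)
Your argument is correct and essentially the same as the paper's. The paper introduces the antiderivative $h(r)=\int_0^r A(|\varphi'|)\varphi'\,ds$ and works with $h'$ and $h''$, while you work directly with $g=A(\varphi')\varphi'$ and its derivative via the auxiliary quotient $w(r)=nr^{-n}H(r)$; these are the same computations packaged slightly differently, and the derivation of (\ref{203}) by differentiating $r^{n-k}g^k=\tfrac{n}{C_n^k}H$ matches the paper's route exactly.
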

\begin{proof}
	Denote
	$$h(r):=\int_{0}^{r} A\left(|\varphi^{\prime}(s)|\right)\varphi^{\prime}(s) ds,$$
	then it satisfies $h(0)=0$ and
	$$h'(r)=A\left(|\varphi^{\prime}(r)|\right)\varphi^{\prime}(r)=\left(\frac{nr^{k-n}}{C_{n}^k} \int_{0}^{r} s^{n-1} f^{k}(\varphi(s)) d s\right)^{\frac{1}{k}}>0,\ 0<r<R.$$
	 It is easy to see that $h(r) \in C^{2}(0, R)$. By (\ref{103}) and (\ref{204}), $\varphi^{\prime}(r)>0$ in $(0, R)$.

	$$
	\lim _{r \rightarrow 0} \frac{h(r)-h(0)}{r-0}=\lim _{r \rightarrow 0} h^{\prime}(\xi)=\lim _{\xi \rightarrow 0}\left(\frac{n\xi^{k-n}}{C_{n}^k} \int_{0}^{\xi} s^{n-1} f^{k}(\varphi(s)) d s\right)^{\frac{1}{k}}=0,
	$$	
	where $\xi=\xi(r) \in(0, r)$. Hence, $h^{\prime}(0)=0$ and $h(r) \in C^1[0, R)$, which implies that $\varphi^{\prime}(0)=0$ and  $\varphi(r) \in C^{1}[0, R)$. One can see that
	\begin{equation}\label{210}
	\begin{aligned}
	\lim _{r \rightarrow 0} \frac{h^{\prime}(r)-h^{\prime}(0)}{r-0}=\lim _{r \rightarrow 0}\left(\frac{n\int_{0}^{r} s^{n-1} f^{k}(\varphi(s)) d s}{C_{n}^k r^{n}}\right)^{\frac{1}{k}}
	=\left(\frac{f^{k}(a)}{C_n^k}\right)^{\frac{1}{k}},
	\end{aligned}
	\end{equation}
	consequently, $h(r) \in C^{2}[0, R)$, which implies that $A\left(\varphi^{\prime}\left(r\right)\right)\varphi^{\prime}\left(r\right)\in C^{1}[0, R)$.
 
 A direct calculation using (\ref{204}) leads to
 \begin{equation}\label{220}
 \begin{aligned}
 h^{\prime \prime}(r) &=\frac{(h^{\prime}(r))^{1-k}}{k}\left(\frac{n(k-n) r^{k-n-1}}{C_n^k} \int_{0}^{r} s^{n-1} f^{k}(\varphi(s)) d s+\frac{n r^{k-1}}{C_n^k} f^{k}(\varphi(r))\right) \\
 &\geq \frac{1}{C_n^k} \left(\frac{h'(r)}{r}\right)^{1-k}f^k(\varphi\left(r\right))>0,
 \end{aligned}
 \end{equation}
 then $h'(r) \in C^{1}[0, R)$ is a strictly monotone increasing  function of $r$, and by (\ref{103}), 	$g\left(\varphi^{\prime}\right):=A\left(\varphi^{\prime}\right)\varphi^{\prime} \in C^{1}(0, \varphi^{\prime}\left(R\right))$ is a strictly monotone increasing  function of $\varphi^{\prime}$, then there exists inverse function $\varphi^{\prime}\left(r\right)=g^{-1}\left(h'\left(r\right)\right)\in C^{1}(0, R)$, which implies $\varphi\left(r\right)\in C^{2}(0, R)$.
	
By  $(\ref{220})$ and (\ref{204}), we have
$$ h^{\prime \prime}(r)=\frac{k-n}{k} \frac{h^{\prime}(r)}{r}+\frac{n}{ kC_n^k}\left(\frac{h^{\prime}(r)}{r}\right)^{1-k} f^{k}(\varphi(r)),$$
it is easy to verify that $\varphi(r)$ satisfies equation $(\ref{203})$.
\end{proof}

\begin{remark}\label{214}
	In particular, if $A\in C^1[0,R), A(0)\neq 0
	$, consider the function $H(r,\varphi^{\prime})=h'(r)-g(\varphi^{\prime})=0$, then $H_{\varphi^{\prime}}(0,0)=A(0)\neq 0$, hence
	 we know from the implicit function theorem that
	 there exists $\varphi^{\prime}\left(r\right)\in C^{1}[0, R)$, then we can strengthen  the regularity to $\varphi\left(r\right)\in C^{2}[0, R)$.
	Otherwise we assume that (\ref{211}) and
	by (\ref{210}), we have
	\begin{equation}\label{213}
		\left(\frac{f^{k}(a)}{C_n^k}\right)^{\frac{1}{k}}=\lim _{r \rightarrow 0} \frac{h^{\prime}(r)}{r}
		=\lim _{r \rightarrow 0} \frac{\left(\varphi'\left(r\right)\right)^{l-1}}{r}
		=\lim _{r \rightarrow 0} \frac{\varphi''\left(r\right)}{r^{-\frac{l-2}{l-1}}},
	\end{equation}
	for $1<q<\frac{l-1}{l-2}$, we can  strengthen  the regularity to $\varphi\left(r\right)\in C^{2}(0, R) \cap W^{2,q}\left(0,R\right)$.

\end{remark}

	\begin{lemma}\label{205}
		For any positive number a, assume $\varphi(r) \in C[0, R) \cap C^{1}(0, R)$ is a positive solution of the Cauchy problem (\ref{204}).
		Then, for $u(x)=\varphi(r)$, where $r=|x|<R$,  we have that 
			\begin{equation}\label{201}
		\begin{aligned}
		\lambda&\left(D_i\left(A\left(|Du|\right)D_j u\right)\right)\\
		&=
		\left(\left(A\left(\varphi^{\prime}(r)\right)\varphi^{\prime}(r)\right)', \frac{A\left(\varphi^{\prime}(r)\right)\varphi^{\prime}(r)}{r}, \cdots, \frac{A\left(\varphi^{\prime}(r)\right)\varphi^{\prime}(r)}{r}\right)
		,\ r\in [0,R),
		\end{aligned}
		\end{equation}
			and then
	 $u(x) \in C^{2}\left(B_{R} \backslash\{0\}\right) \cap \Phi^{k}\left(B_{R}\right)$ is a solution of 
	 \begin{equation}\label{221}
	 \sigma_{k}\left(\lambda\left(D_i\left(A\left(|Du|\right)D_j u\right)\right)\right)=\frac{C_{n}^{k}r^{1-n}}{n}\left(r^{n-k}\left(A\left(\varphi^{\prime}(r)\right)\varphi^{\prime}(r)\right)^k\right)'=f^k(u).
	 \end{equation}
	\end{lemma}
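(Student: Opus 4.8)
The plan is to compute the Hessian-type matrix $\left(D_i(A(|Du|)D_j u)\right)$ explicitly for the radial profile $u(x)=\varphi(|x|)$, diagonalize it, and then feed the resulting $\sigma_k$ identity into the ordinary differential equation (\ref{203}) furnished by Lemma~\ref{206}. First I would invoke Lemma~\ref{206} to upgrade the hypotheses: $\varphi\in C^1[0,R)\cap C^2(0,R)$, $\varphi'(0)=0$, $\varphi'>0$ on $(0,R)$, and $g(r):=A(\varphi'(r))\varphi'(r)=h'(r)$ lies in $C^1[0,R)$ with $g(0)=0$ and $g'(r)=h''(r)>0$ on $(0,R)$. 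Since $\varphi'>0$ off the origin we have $|Du|=\varphi'(r)$ and $D_j u=\varphi'(r)\frac{x_j}{r}$, so $A(|Du|)D_j u=g(r)\frac{x_j}{r}$. A direct differentiation, using $D_i r=x_i/r$, gives
\[
D_i\!\left(A(|Du|)D_j u\right)=\Big(g'(r)-\frac{g(r)}{r}\Big)\frac{x_i x_j}{r^2}+\frac{g(r)}{r}\,\delta_{ij},
\]
a rank-one perturbation of $\frac{g(r)}{r}I$. Because the projection $\frac{x\otimes x}{r^2}$ has eigenvalue $1$ along $x$ and eigenvalue $0$ on $x^\perp$, this matrix has the simple eigenvalue $g'(r)$ and the eigenvalue $\frac{g(r)}{r}$ of multiplicity $n-1$, which is exactly (\ref{201}) for $r\in(0,R)$; all of these are positive, so $\lambda(D_i(A(|Du|)D_j u))\in\Gamma_n\subset\Gamma_k$ there.

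Next I would pass to the origin. By (\ref{210}) one has $\lim_{r\to0}g'(r)=\lim_{r\to0}h''(r)=(f^k(a)/C_n^k)^{1/k}$, and since $g(0)=0$ also $\lim_{r\to0}g(r)/r=g'(0)=(f^k(a)/C_n^k)^{1/k}$; hence both eigenvalues tend to the common positive limit $(f^k(a)/C_n^k)^{1/k}$, so (\ref{201}) and the inclusion in $\Gamma_k$ extend to all of $[0,R)$. For the regularity assertion, $\varphi\in C^2(0,R)$ gives $u\in C^2(B_R\setminus\{0\})$ at once, $\varphi'(0)=0$ gives $u\in C^1(B_R)$, and writing $g'(r)-\frac{g(r)}{r}=\frac1r\int_0^r\big(g'(r)-g'(s)\big)\,ds\to0$ as $r\to0$ (by continuity of $g'$) shows the bounded-but-discontinuous factor $\frac{x_i x_j}{r^2}$ is multiplied by a quantity vanishing at $0$; therefore the displayed matrix extends continuously to $x=0$ with value $g'(0)\delta_{ij}$, so $A(|Du|)Du\in C^1(B_R)$ and $u\in\Phi^k(B_R)$.

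Finally, evaluating $\sigma_k$ on the eigenvalue vector $\big(g',g/r,\dots,g/r\big)$ by splitting the $k$-element index sets according to whether they contain the first coordinate gives
\[
\sigma_k\big(\lambda(D_i(A(|Du|)D_j u))\big)=C_{n-1}^{k-1}\,g'(r)\Big(\frac{g(r)}{r}\Big)^{k-1}+C_{n-1}^{k}\Big(\frac{g(r)}{r}\Big)^{k},
\]
which is precisely the left-hand side of (\ref{203}); Lemma~\ref{206} then identifies this with $\frac{C_n^k r^{1-n}}{n}\big(r^{n-k}(A(\varphi'(r))\varphi'(r))^k\big)'=f^k(\varphi(r))=f^k(u)$, proving (\ref{221}). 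I expect the only genuinely delicate step to be the passage through the origin — ensuring the rank-one term dies there so that both the eigenvalue formula (\ref{201}) and the $C^1$-regularity of $A(|Du|)Du$ survive the singularity of $x_i x_j/r^2$ — and this is handled entirely by the limits already recorded in Lemma~\ref{206}.
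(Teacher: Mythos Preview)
Your proof is correct and follows essentially the same route as the paper: invoke Lemma~\ref{206} for the regularity and sign information, write $D_i(A(|Du|)D_ju)$ as a rank-one perturbation of a scalar matrix, read off the eigenvalues, extend through the origin using the limits already recorded in Lemma~\ref{206}, and then feed the radial $\sigma_k$ expression into (\ref{203}). The only minor deviation is in the verification of $\lambda\in\Gamma_k$: you use $h''>0$ from (\ref{220}) to conclude that every eigenvalue is positive, hence $\lambda\in\Gamma_n\subset\Gamma_k$, whereas the paper instead checks $\sigma_l>0$ for each $1\le l\le k$ directly from the radial formula --- your shortcut is a little more economical, but the overall argument is the same.
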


\begin{proof}
	By Lemma \ref{206}, we can get  $\varphi(r)\in C^{1}[0, R) \cap C^{2}(0, R) $  with $A\left(\varphi^{\prime}\left(r\right)\right)\varphi^{\prime}\left(r\right)\in C^{1}[0, R)$, and it satisfies  $\varphi^{\prime}(0)=0$, $\varphi^{\prime}(r)>0$ in $(0, R)$. For $u(x)=\varphi(r)$, where $r=|x|\in(0, R),\ i, j=1, \cdots, n $,  we have 
	\begin{equation}\label{208}
			u_i(x)=\varphi^{\prime}(r) \frac{x_i}{r},
	\end{equation}
	\begin{equation}\label{209}
		|D u|=\left|\varphi^{\prime}(r) \frac{x}{r}\right|=\varphi^{\prime}(r),
	\end{equation}
		$$
	u_{ij}(x)=\left(\frac{\varphi^{\prime \prime}(r)}{r^{2}}\right) x_{i} x_{j}-\left(\frac{\varphi^{\prime}(r)}{r^{3}}\right) x_{i} x_{j}+\left(\frac{\varphi^{\prime}(r)}{r}\right) \delta_{i j}.
	$$
	Then by (\ref{208}) and (\ref{209}), we have
	\begin{equation}\label{207}
	\begin{aligned}
	&D_i\left(A\left(|Du|\right)  D_j u\right)
	=D_i\left(A\left(\varphi^{\prime}(r)\right)\varphi^{\prime}(r)\frac{x_j}{r}\right) \\
	&	= \left(A\left(\varphi^{\prime}(r)\right)\varphi^{\prime}(r)\right)'\frac{x_ix_j}{r^2}+A\left(\varphi^{\prime}(r)\right)\varphi^{\prime}(r)\frac{\delta_{i j}r-x_j\frac{x_i}{r}}{r^2}  \\
	&=\left(\left(A\left(\varphi^{\prime}(r)\right)\varphi^{\prime}(r)\right)'-\frac{A\left(\varphi^{\prime}(r)\right)\varphi^{\prime}(r)}{r}\right)\frac{x_ix_j}{r^2}+\frac{A\left(\varphi^{\prime}(r)\right)\varphi^{\prime}(r)}{r}\delta_{i j}.
	\end{aligned}
	\end{equation}
	By (\ref{208}) and $\varphi^{\prime}(0)=0$, we have
	$$
	0
	\leq\lim _{x \rightarrow 0} |u_i(x)|
	=\lim _{x \rightarrow 0} |\varphi^{\prime}(r)||\frac{x_i}{r}|
	\leq\lim _{r \rightarrow 0}\varphi^{\prime}(r)
	=0,
	$$
	which means
	$$
	\lim _{x \rightarrow 0} u_i(x)=0.
	$$
	Similarly, using (\ref{207}) we have
	$$
	\begin{aligned}
	&\lim _{x \rightarrow 0} D_i\left(A\left(|Du|\right)D_j u\right)\\
	&=\lim _{x \rightarrow 0}
	\left(\left(\left(A\left(\varphi^{\prime}(r)\right)\varphi^{\prime}(r)\right)'-\frac{A\left(\varphi^{\prime}(r)\right)\varphi^{\prime}(r)}{r}\right)\frac{x_ix_j}{r^2}+\frac{A\left(\varphi^{\prime}(r)\right)\varphi^{\prime}(r)}{r}\delta_{i j}\right)\\
	&=\left(A\left(\varphi^{\prime}(0)\right)\varphi^{\prime}(0)\right)' \delta_{i j}.
	\end{aligned}
	$$
	Define
	$$
	u_i(0)=0, D_i\left(A\left(|Du|\right)D_j u\right)\left(0\right)=\left(A\left(\varphi^{\prime}(0)\right)\varphi^{\prime}(0)\right)' \delta_{i j}.
	$$
	then $u(x) \in C^{1}\left(B_{R}\right) \cap C^{2}\left(B_{R} \backslash\{0\}\right)$, with $A\left(|Du|\right)D u \in C^{1}\left(B_{R}\right) $.
	
    For $ r \in[0, R)$,	let
	\begin{equation*}
	a=\left\{\begin{array}{l}
	\frac{\left(A\left(\varphi^{\prime}(r)\right)\varphi^{\prime}(r)\right)'}{r^2}-\frac{A\left(\varphi^{\prime}(r)\right)\varphi^{\prime}(r)}{r^3},\ r \in(0, R), \\
	0,\ r=0,
	\end{array}\right.
	b=\left\{\begin{array}{l}
	\frac{A\left(\varphi^{\prime}(r)\right)\varphi^{\prime}(r)}{r},\ r \in(0, R), \\
	\left(A\left(\varphi^{\prime}(0)\right)\varphi^{\prime}(0)\right)',\ r=0,
	\end{array}\right.
	\end{equation*}
	then the matrix
	$$
	D_i\left(A\left(|Du|\right)D_j u\right)=a x^{T} x+b I.
	$$	
	By calculations of linear algebra, we know that the eigenvalues of a symmetric matrix such as $D_i\left(A\left(|Du|\right)D_j u\right)$ is $\left(a r^{2}+b, b, \cdots, b\right)$. Hence
	\begin{equation*}
	\lambda\left(D_i\left(A\left(|Du|\right)D_j u\right)\right)
	=\left\{\begin{array}{l}
	\left(\left(A\left(\varphi^{\prime}(r)\right)\varphi^{\prime}(r)\right)', \frac{A\left(\varphi^{\prime}(r)\right)\varphi^{\prime}(r)}{r}, \cdots, \frac{A\left(\varphi^{\prime}(r)\right)\varphi^{\prime}(r)}{r}\right), r \in(0, R), \\
	\left(\left(A\left(\varphi^{\prime}(0)\right)\varphi^{\prime}(0)\right)',\cdots, \left(A\left(\varphi^{\prime}(0)\right)\varphi^{\prime}(0)\right)'\right), r=0.
	\end{array}\right.
	\end{equation*}
	Since
	$$
	\lim _{r \rightarrow 0} \frac{A\left(\varphi^{\prime}(r)\right)\varphi^{\prime}(r)}{r}=	\left(A\left(\varphi^{\prime}(0)\right)\varphi^{\prime}(0)\right)',
	$$
	we can always think that (\ref{201}) holds, 
	 and 
	 equation (\ref{221}) can thus be obtained easily from the definition of $\sigma_{k}$.
	
		Since $f$ and $\varphi$ are both monotone non-decreasing, for $r \in [0, R)$,
	$$
	f(\varphi(r)) \geq f(\varphi(0))=f(a)>0.
	$$
	Then  we know that $\frac{A\left(\varphi^{\prime}(r)\right)\varphi^{\prime}(r)}{r}>0$ and
	$$
	\begin{aligned}
	&\sigma_{k}\left(\lambda\left(D_i\left(A\left(|Du|\right)D_j u\right)\right)\right)\\
	 &=C_{n-1}^{k-1}\left(\frac{A\left(\varphi^{\prime}(r)\right)\varphi^{\prime}(r)}{r}\right)^{k-1}\left(\left(A\left(\varphi^{\prime}(r)\right)\varphi^{\prime}(r)\right)'+\frac{n-k}{k} \frac{A\left(\varphi^{\prime}(r)\right)\varphi^{\prime}(r)}{r}\right) \\
	& =f^{k}(\varphi(r))>0,
	\end{aligned}
	$$
	which means
	$$
	\left(A\left(\varphi^{\prime}(r)\right)\varphi^{\prime}(r)\right)'+\frac{n-k}{k} \frac{A\left(\varphi^{\prime}(r)\right)\varphi^{\prime}(r)}{r}>0,
	$$
	and then for $1 \leq l \leq k$,
	$$
	\begin{aligned}
	\sigma_{l}&\left(\lambda\left(D_i\left(A\left(|Du|\right)D_j u\right)\right)\right)\\ &=C_{n-1}^{l-1}\left(\frac{A\left(\varphi^{\prime}(r)\right)\varphi^{\prime}(r)}{r}\right)^{l-1}\left(\left(A\left(\varphi^{\prime}(r)\right)\varphi^{\prime}(r)\right)'+\frac{n-l}{l} \frac{A\left(\varphi^{\prime}(r)\right)\varphi^{\prime}(r)}{r}\right) \\
	& \geq C_{n-1}^{l-1}\left(\frac{A\left(\varphi^{\prime}(r)\right)\varphi^{\prime}(r)}{r}\right)^{l-1}\left(\left(A\left(\varphi^{\prime}(r)\right)\varphi^{\prime}(r)\right)'+\frac{n-k}{k} \frac{A\left(\varphi^{\prime}(r)\right)\varphi^{\prime}(r)}{r}\right)  \\
	&>0.
	\end{aligned}
	$$
	This implies that $\lambda\left(D_i\left(A\left(|Du|\right)D_j u\right)\right)\in \Gamma_{k}$  is valid in $B_R$.
\end{proof}
	Obviously for $u(x)=\varphi(r)$,  we can see that  $u(x) \in C^{2}\left(B_{R} \backslash\{0\}\right) \cap C^{1}\left(B_{R}\right)$, with $A\left(|Du|\right)D u \in C^{1}\left(B_{R}\right) $	is a solution of  (\ref{221}) if and only if $\varphi(r)\in C^{1}[0, R) \cap C^{2}(0, R) $  with $A\left(\varphi^{\prime}\left(r\right)\right)\varphi^{\prime}\left(r\right)\in C^{1}[0, R)$ is a solution of (\ref{203}).
\begin{remark}\label{212}
	In particular, if $A\in C^1[0,\infty),\ A(0)\neq 0
	$, by Remark \ref{214}, we have $\varphi\left(r\right)\in C^{2}[0, R),$ and
$$
\lim _{x \rightarrow 0} u_{ij}(x)=\lim _{x \rightarrow 0}\left(\left(\varphi^{\prime \prime}(r)-\frac{\varphi^{\prime}(r)}{r}\right) \frac{x_{i} x_{j}}{r^{2}}+\left(\frac{\varphi^{\prime}(r)}{r}\right) \delta_{i j}\right)
=\varphi^{\prime \prime}(0) \delta_{i j}.
$$
Difine $u_{ij}(0)=\varphi^{\prime \prime}(0) \delta_{i j}$,
then it is straightforward to show that $u(x) \in C^{2}\left(R_{R}\right)$.
Otherwise we assume (\ref{211}),
then by $(\ref{213})$ and $\left|\frac{x_{i} x_{j}}{r^{2}}\right| \leq 1$, we have
$$
\begin{aligned}
\limsup _{r \rightarrow 0} \frac{|u_{ij}(x)|}{r^{-\frac{l-2}{l-1}}}
 &=\limsup _{r \rightarrow 0} \frac{\left|\left(\varphi^{\prime \prime}(r)-\frac{\varphi^{\prime}(r)}{r}\right) \frac{x_{i} x_{j}}{r^{2}}+\left(\frac{\varphi^{\prime}(r)}{r}\right) \delta_{i j}\right|}{r^{-\frac{l-2}{l-1}}} \\
& \leq \lim _{r \rightarrow 0} \frac{\left|\varphi^{\prime \prime}(r)\right|+\left|\frac{\varphi^{\prime}(r)}{r}\right|+\left|\frac{\varphi^{\prime}(r)}{r}\right|}{r^{-\frac{l-2}{l-1}}} \\
&=3 \left(\frac{f^{k}(a)}{C_n^k}\right)^{\frac{1}{k}}.
\end{aligned}
$$
For $\frac{l-2}{l-1}q<1$, we have $D^{2} u(x) \in L^{n q}\left(B_{R}\right)$. Then it is straightforward to see $u(x) \in W^{2, n q}\left(B_{R}\right)$.
\end{remark}
Next, we dicuss the local existence of $(\ref{204})$ near $r=0$. The equipment we use is Euler's break line, and the process is similar to the proof of the existence theorem of  ordinary differential equations  (see \cite{r17}).		
\begin{lemma}\label{219}
	For any positive number a, there exist a positive number $R$ such that the Cauchy problem (\ref{204}) has a solution in $[0, R]$.
\end{lemma}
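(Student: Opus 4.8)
The plan is to solve the Cauchy problem (\ref{204}) by Euler's polygonal (broken‑line) method, in parallel with the classical Peano existence theorem. The first step is to reformulate (\ref{204}) as an integral equation. By (\ref{103}) the function $g(p):=pA(p)$ is continuous and strictly increasing on $[0,\infty)$ with $g(0)=0$, so it admits a continuous, strictly increasing inverse $g^{-1}$ on $[0,L)$, where $L:=\lim_{p\to\infty}pA(p)\in(0,\infty]$, with $g^{-1}(0)=0$. Since $F(r,\varphi)\ge 0$, any solution necessarily has $\varphi'\ge 0$, so $|\varphi'|=\varphi'$ and (\ref{204}) is equivalent to
\[
\varphi(r)=a+\int_0^r g^{-1}\!\big(F(t,\varphi)\big)\,dt,\qquad
F(t,\varphi)=\Big(\tfrac{n}{C_n^k}\,t^{k-n}\!\int_0^t s^{n-1}f^k(\varphi(s))\,ds\Big)^{1/k}.
\]
Although $t^{k-n}$ is singular at $t=0$ when $k<n$, it is multiplied by $\int_0^t s^{n-1}f^k(\varphi)\,ds=O(t^n)$, so $F(\cdot,\varphi)$ extends continuously to $t=0$ with value $0$; more precisely $(r,\varphi)\mapsto F(r,\varphi)$ is continuous on $[0,R]\times C([0,R])$.

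Next I would set up an a priori confinement region and fix $R$. Put $\delta:=1$ and $M:=f(a+\delta)<\infty$ (using that $f$ is non‑decreasing). If $a\le\varphi\le a+\delta$ on $[0,r]$, then monotonicity of $f$ gives
\[
F(r,\varphi)\le\Big(\tfrac{n}{C_n^k}\,r^{k-n}M^k\!\int_0^r s^{n-1}\,ds\Big)^{1/k}=\frac{M}{(C_n^k)^{1/k}}\,r .
\]
Choose $R>0$ so small that $\tfrac{M}{(C_n^k)^{1/k}}R<L$ and $KR\le\delta$, where $K:=g^{-1}\!\big(\tfrac{M}{(C_n^k)^{1/k}}R\big)\to 0$ as $R\to 0$; this guarantees that, throughout the construction, the arguments of $g^{-1}$ remain in its domain $[0,L)$ and the slopes $g^{-1}(F)$ remain bounded by $K$.

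For each $N\ge 1$ set $r_j=jR/N$ and define $\varphi_N\in C[0,R]$ piecewise linearly by $\varphi_N(0)=a$ and, on $[r_j,r_{j+1}]$, slope $g^{-1}(F_j)$, where $F_0:=0$ and $F_j:=\big(\tfrac{n}{C_n^k}r_j^{k-n}\int_0^{r_j}s^{n-1}f^k(\varphi_N(s))\,ds\big)^{1/k}$ for $j\ge 1$ (legitimate since $\varphi_N$ is already known on $[0,r_j]$, so the ``memory'' carried by $F$ costs nothing in the Euler scheme). By induction on $j$ and the previous paragraph, $a\le\varphi_N\le a+\delta$ and $\varphi_N$ is $K$‑Lipschitz; hence $\{\varphi_N\}$ is uniformly bounded and equicontinuous, and by Arzelà–Ascoli a subsequence $\varphi_{N_i}\to\varphi$ uniformly on $[0,R]$, with $a\le\varphi\le a+\delta$, so $\varphi>0$. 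Finally, for $r\in[r_j,r_{j+1}]$ one has $\varphi_{N_i}'(r)=g^{-1}(F_j)$; since $|r-r_j|\le R/N_i\to0$ and $\|\varphi_{N_i}-\varphi\|_{C[0,R]}\to0$, joint continuity of $F$ yields $F_j\to F(r,\varphi)$ uniformly in $r$, whence
\[
\varphi(r)=\lim_{i}\Big(a+\int_0^r\varphi_{N_i}'(t)\,dt\Big)=a+\int_0^r g^{-1}\!\big(F(t,\varphi)\big)\,dt ,
\]
so $\varphi\in C^1[0,R]$ is a positive solution of (\ref{204}) on $[0,R]$, which is the asserted radius.

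The bookkeeping in the confinement step and the Arzelà–Ascoli argument are routine. The main obstacle is the continuity analysis near $r=0$: one must verify that $r\mapsto g^{-1}(F(r,\varphi))$ is genuinely continuous up to the origin in spite of the factor $r^{k-n}$ — which rests on the cancellation $\int_0^r s^{n-1}f^k(\varphi)\,ds=O(r^n)$ — and that $(r,\varphi)\mapsto F(r,\varphi)$ is jointly continuous with enough uniformity over the equicontinuous family $\{\varphi_{N_i}\}$ that the limit passes through both $f^k$ and $g^{-1}$. (Alternatively, the last paragraph may be replaced by Schauder's fixed point theorem applied to the compact operator $(T\psi)(r):=F\big(r,\,a+\int_0^r g^{-1}(\psi(t))\,dt\big)$ on a suitable ball of $C([0,R])$.)
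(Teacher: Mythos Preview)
Your approach is essentially the same as the paper's: reformulate (\ref{204}) as $\varphi'=g^{-1}(F(r,\varphi))$, build Euler broken lines inside a small confinement box, extract a uniform limit via Arzel\`a--Ascoli, and pass to the limit in the integral equation. The paper's Step~2 carries out precisely the ``main obstacle'' you flag at the end, by splitting $[0,R]$ into a neighborhood of $0$ (where $G(r,\psi)\to 0$ uniformly, so the first piece is taken constant) and $[\bar r,R]$ (where $g^{-1}$ is Lipschitz on the compact range $[F(\bar r,\psi),F(R,\psi)]$ and $F$ varies continuously in $r$), thereby showing each broken line is an $\varepsilon$-approximate solution; this is exactly the uniform continuity you need to justify $F_j\to F(r,\varphi)$ and pass to the limit.
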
	
\begin{proof}
	By  Lemma \ref{206}, we know that $\varphi'(r)=g^{-1}\left(F(r,\varphi)\right)\in C[0, R) \cap C^{1}(0, R)$ is a strictly monotone increasing  function of $r$.
	We define a functional $G(\cdot, \cdot)$ on
	$$
	\mathcal{R}:=[0, R] \times\left\{\varphi \in C[0, R]: a \leq \varphi<2 a\right\},
	$$
	as
	$$
	G(r, \varphi):=g^{-1}\left(F(r,\varphi)\right),
	$$
	where $R$ is a small enough positive constant. Then (\ref{204}) can be rewritten as
	$$
	\varphi^{\prime}(r)=G(r, \varphi).
	$$
	It is easy to see $G>0$ for $r>0$.
	
	We defined a Euler's break line on $[0, R]$ as
	\begin{equation}\label{215}
	\left\{\begin{array}{l}
	\psi(r)=a, \ 0 \leq r \leq r_{1}, \\
	\psi(r)=\psi\left(r_{i-1}\right)+G\left(r_{i-1}, \psi\right)\left(r-r_{i-1}\right), \ r_{i-1}<r \leq r_{i}, \ i=2,3, \cdots, m,
	\end{array}\right.
	\end{equation}
	where $0=r_{0}<r_{1}<\cdots<r_{m}=R$ and $m \in \mathbb{N}$.
	
	\textbf{Step 1.} We want to make sure that $a \leq \psi(r)<2 a$ for all $r \in[0, R]$, \ i.e.   $(r,\psi)\in \mathcal{R}$.
	In fact, it is obvious $\psi(r) \geq a$. Since
	\begin{equation}\label{216}
	\begin{aligned}
	G\left(r, \psi\right)
		&\leq g^{-1}\left(\left(\frac{nr^{k-n}}{C_{n}^k} \int_{0}^{r} s^{n-1}  d s f^{k}(\psi(r))\right)^{\frac{1}{k}}\right)\\
	&\leq g^{-1}\left(\left(\frac{1}{C_{n}^k} \right)^{\frac{1}{k}}  R f(\psi(R))\right)<\infty.
	\end{aligned}
	\end{equation}	
	Then for the break line $(r, \psi)$, we have
	$$
	a \leq \psi(r) \leq a+g^{-1}\left(\left(\frac{1}{C_{n}^k} \right)^{\frac{1}{k}}  R f(\psi(R))\right) r \leq a+g^{-1}\left(\left(\frac{1}{C_{n}^k} \right)^{\frac{1}{k}}  R f(\psi(R))\right)R.
	$$	
	Therefore, we can choose $R$ sufficiently small to make sure that $\psi(r)<2 a$.
	
	\textbf{Step 2.} We will prove that Euler's break line $\psi$ is an $\varepsilon$-appromation solution of (\ref{204}). To do this, we only need to prove that for any small $\varepsilon>0$, there are appropriate points $\left\{r_{i}\right\}_{i=1, \cdots, m}$ to make the break line satisfy
	\begin{equation}
	\left|\frac{d \psi(r)}{d r}-G(r, \psi)\right|<\varepsilon, \ r \in[0, R].
	\end{equation}
	By (\ref{215}), $\psi(r)$ has continuous derivatives in $[0, R]$ expect for a few points. There are unilateral derivatives at these individual points. If the derivative doesn't exist, we consider the right derivative.
	
	As a matter of fact, by (\ref{216}), it is easy to see that
	$$
	\lim _{r \rightarrow 0} G(r, \psi)=0
	$$
	is valid uniformly for any $(r, \psi) \in \mathcal{R}$. Then for each $\varepsilon>0$, there exists $\bar{r}\in(0,R)$ such that for $0 \leq r<\bar{r}$, we have
	$$
	G(r, \psi)<\varepsilon.
	$$	
	Assume $r_{1}=\bar{r}$, then
	$$
	\left|\frac{d \psi(r)}{d r}-	G(r, \psi)\right|=|	G(r, \psi)|<\varepsilon,\ 0<r<\bar{r}.
	$$	
	For $\bar{r} \leq r \leq R$,  by the proof of Lemma \ref{206}, we know that $g^{-1}\in C[0,F(R,\psi)]\cap C^1(0,F(R,\psi)]$, then $g^{-1}$ is Liptchitz continuous on $[F(\bar{r},\psi),F(R,\psi)]$. Let $r_{i-1}<r \leq r_{i}$, we have
	\begin{equation*}
	\begin{aligned}
	&\left|\frac{d \psi(r)}{d r}-G(r, \psi)\right|
\leq C\left|F(r_{i-1},\psi)-F(r,\psi)\right|\\
	\leq& C\left(\frac{n}{C_n^k}\right)^{\frac{1}{k}} \left(
	r^{k-n} \int_{0}^{r} s^{n-1} f^{k}(\psi(s)) d s-r_{i-1}^{k-n} \int_{0}^{r_{i-1}} s^{n-1} f^{k}(\psi(s)) d s \right)^{\frac{1}{k}}\\
	\leq& C\left(\frac{n}{C_n^k}\right)^{\frac{1}{k}} \left(\left(r_{i-1}^{k-n}-r^{k-n}\right) \int_{0}^{r} s^{n-1} f^{k}(\psi(s)) d s
	+r_{i-1}^{k-n} \int_{r_{i-1}}^{r} s^{n-1} f^{k}(\psi(s)) d s \right)^{\frac{1}{k}}\\
	\leq &C\left(\frac{1}{C_n^k}\right)^{\frac{1}{k}} \left(\left(r_{i-1}^{k-n}-r^{k-n}\right) R^n f^{k}(2a) 
	+\bar{r}^{k-n} \left(r^{n}-r_{i-1}^{n} \right) f^{k}(2a)  \right)^{\frac{1}{k}}.
	\end{aligned}
	\end{equation*}
	 Since function $r^{k-n}$ and $r^{n}$ are both  Liptchitz continuous on $[\bar{r}, R]$, for the above $\varepsilon$, there exists $\delta\left(\varepsilon\right)>0$ satisfying  
	$$
	\max _{2 \leq i \leq m}\left|r_{i-1}-r_{i}\right|<\delta\left(\varepsilon\right),
	$$
	and then we have 
	$$\left|\frac{d \psi(r)}{d r}-G(r, \psi)\right|<C|r_{i-1}-r|<\varepsilon.$$
Thus, Euler's break line $\psi$ is an $\varepsilon$-appromation solution of (\ref{204}).
	
	\textbf{Step 3.} The next step is to find a solution of (\ref{204}) by the Euler break line we defined. Assume $\left\{\varepsilon_{j}\right\}_{j=1}^{\infty}$ is a positive constant sequence converging to $0$. For each $\varepsilon_{j}$, there is an $\varepsilon_{j}$-appromation solution $\psi_{j}$ on $[0, R]$, defined as above. By Step 1, it is easy to know that
	$$
	\left|\psi_{j}\left(r^{\prime}\right)-\psi_{j}\left(r^{\prime \prime}\right)\right|=G(r_{i-1},\psi_j)|r'-r''| \leq M\left|r^{\prime}-r^{\prime \prime}\right|,
	$$
	where $(r^{\prime},\psi_{j}), (r^{\prime \prime},\psi_{j}) \in\mathcal{R}$. That is to say, $\left\{\psi_{j}\right\}$ is equicontinuous and uniformaly bounded $(r''=0)$. Then by the Ascoli-Arzela Lemma, we can find a uniformly convergent subsequence, still denoted as $\left\{\psi_{j}\right\}$, without loss of generality.
	
	Assume $\lim _{j \rightarrow \infty} \psi_{j}=\varphi$. Since $\psi_{j}\in C[0, R]$, we know that $\varphi\in C[0, R]$. By $\psi_{j}(0)=a$, we have $\varphi(0)=a$.
	
	Since $\psi_{j}$ is an $\varepsilon_{j}$-appromation solution, we have
	\begin{equation}\label{217}
	\frac{d \psi_{j}(r)}{d r}=G(r,\psi_j)+\Delta_{j}(r),
	\end{equation}
	where $\left|\Delta_{j}(r)\right|<\varepsilon_{j}$, for $r \in[0, R]$. Intergrating (\ref{217}) from 0 to $r(\leq R)$, we have
	$$
	\psi_{j}(r)=a+\left(\int_{0}^{r} G(s,\psi_j) d s+\int_{0}^{r} \Delta_{j}(s) d s\right).
	$$
	Let $j \rightarrow \infty$,
	\begin{equation}\label{218}
	\begin{aligned}
	\varphi(r) &=a+\lim _{j \rightarrow \infty}\left(\int_{0}^{r} G(s,\psi_j) d s+\int_{0}^{r} \Delta_{j}(s) d s\right) \\
	&=a+\int_{0}^{r} G(s,\varphi) d s .
	\end{aligned}
	\end{equation}
	Since $\varphi\in C[0, R]$, by $(\ref{218})$,   $\varphi\in C^1(0, R]$. Differentiating (\ref{218}), we have
	$
	\varphi^{\prime}(r)=G(r,\varphi),\ r>0.
	$
	Hence, we can see that $\varphi$ satisfies $(\ref{204})$ in $[0, R]$.	
\end{proof}		
In fact, a local solution also exists for any real number $a$ if we do not consider only the positive ones. Once $a$ is positive, it is easy to know the solution $\varphi$ is positive, too.

\section{Proof of the Main Results}

	We will prove the main results by the comparison lemma.
	\begin{lemma}\label{305}
	 Let $\varphi(r) \in  C^{1}[0, R) \cap C^{2}(0, R)$ with $A\left(\varphi^{\prime}\left(r\right)\right)\varphi^{\prime}\left(r\right)\in C^{1}[0, R)$ satisfying (\ref{203}), with $\varphi^{\prime}(0)=0$ and $\varphi(r) \rightarrow \infty$ as $r \rightarrow R$. Then, if $u(x) \in  C^{2}\left(\mathbb{R}^{n} \backslash\{0\}\right) \cap \Phi^{ k}\left(\mathbb{R}^{n}\right)$ is a positive solution of $(\ref{101})$, we have $u(x) \leq \varphi(|x|)$ at each point in $B_{R}$.
	\end{lemma}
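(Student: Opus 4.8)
\noindent The plan is to argue by contradiction, exploiting the one piece of information that makes the statement non‑vacuous: an \emph{entire} solution $u$ is bounded on $\overline{B_R}$, whereas $\varphi$ blows up at $R$. Set $v(x):=\varphi(|x|)$ for $x\in B_R$. By Lemma~\ref{205} and the remark following it, $v\in C^2(B_R\setminus\{0\})\cap\Phi^k(B_R)$ solves $\sigma_k(\lambda(D_i(A(|Dv|)D_jv)))=f^k(v)$ in $B_R$, and (using $\varphi'(0)=0$) the matrix $(D_i(A(|Dv|)D_jv))(0)$ is a scalar multiple $c_0 I$ of the identity. Since $u\in C^1(\mathbb R^n)$, the number $N:=\sup_{\overline{B_R}}u$ is finite, and since $\varphi(r)\to\infty$ as $r\to R^-$ there is $\rho_1^*<R$ with $\varphi(r)>N$ for all $r\in[\rho_1^*,R)$; hence $u<v$ on $\partial B_{\rho_1}$ for every such $\rho_1$. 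Because $\bigcup_{\rho_1\in[\rho_1^*,R)}B_{\rho_1}=B_R$, it is enough to prove: \emph{if $u<v$ on $\partial B_{\rho_1}$, then $u\le v$ in $B_{\rho_1}$.}

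\noindent So suppose $M:=\max_{\overline{B_{\rho_1}}}(u-v)>0$; as $u-v<0$ on $\partial B_{\rho_1}$ the maximum is attained, and $\Omega:=\{x\in B_{\rho_1}:(u-v)(x)=M\}$ is nonempty, closed in $B_{\rho_1}$, and disjoint from $\partial B_{\rho_1}$. The ellipticity input I would use is a pointwise monotonicity: writing $\Pi(p):=\int_0^{|p|}sA(s)\,ds$, condition~(\ref{103}) makes $\Pi$ strictly convex, and $(D_i(A(|Dw|)D_jw))$ is similar to $D^2\Pi(Dw)^{1/2}\,D^2w\,D^2\Pi(Dw)^{1/2}$; therefore, if at some point $Du=Dv$ and $D^2u\le D^2v$ with both functions admissible there, the eigenvalue vector of $u$'s matrix is $\le$ that of $v$'s componentwise, and since $\Gamma_k$ is an upward‑closed convex cone on which $\sigma_k^{1/k}$ is strictly increasing in each argument, one gets $\sigma_k^{1/k}(\lambda(D_i(A(|Du|)D_ju)))\le\sigma_k^{1/k}(\lambda(D_i(A(|Dv|)D_jv)))$ at that point.

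\noindent Next I would show $\Omega$ is open. Let $x_1\in\Omega$. If $x_1\ne 0$, then near $x_1$ all functions are $C^2$ and $|Dv|=|\varphi'|$ is bounded away from $0$, so the operator only sees $A$ near a nonzero value; at $x_1$ one has $D(u-v)(x_1)=0$, $D^2(u-v)(x_1)\le 0$, and, since $u-v$ is close to $M>0$ there, $f(u)\ge f(v)$. Integrating $\tfrac{d}{dt}\sigma_k^{1/k}(\lambda(D_i(A(|D(\cdot)|)D_j(\cdot))))$ along the segment $tu+(1-t)v$ — which stays admissible because $\{X:\lambda(B^{1/2}XB^{1/2})\in\Gamma_k\}$ is a convex cone — yields a linear, uniformly elliptic inequality $\mathcal L(u-v)\ge f(u)-f(v)\ge 0$ near $x_1$ with bounded coefficients, and the strong maximum principle forces $u-v\equiv M$ near $x_1$. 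Hence $\Omega\setminus\{0\}$ is open and closed in the connected set $B_{\rho_1}\setminus\{0\}$ (for $n\ge 2$), so it is empty or all of it; in the second case $u\equiv v+M$ on $\overline{B_{\rho_1}}$, contradicting $u<v$ on $\partial B_{\rho_1}$. The remaining possibility is $\Omega=\{0\}$, the maximum being attained \emph{only} at the origin — and this is the delicate case, since $u$ is merely $C^1$ there. I would dispose of it by observing that $u-v$ is continuous at $0$ with value $M$, is a classical subsolution of the above linear elliptic inequality on a punctured neighbourhood of $0$ (using $(D_i(A(|Dv|)D_jv))(0)=c_0I$ and $A(|Du|)Du\in C^1$), and therefore — the origin being a single point, of codimension $\ge 2$ — is a subsolution across $0$; the strong maximum principle then again gives $u-v\equiv M$ near $0$, a contradiction.

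\noindent I expect the two real obstacles to be: (i) the case $\Omega=\{0\}$, where the weak regularity of $u$ at the origin must be circumvented (the scalar form of $v$'s second‑order term there and the membership $A(|Du|)Du\in C^1$ are the tools); and (ii) writing $G[u]-G[v]$ for $u-v$ as a bona fide linear uniformly elliptic inequality when only $A\in C^1(0,\infty)$ is available, i.e.\ controlling the lower‑order term arising from the gradient dependence of the operator near a contact point — a standard but technical point, harmless because $D(u-v)$ vanishes there.
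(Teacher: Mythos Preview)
Your strategy is the paper's: set $v(x)=\varphi(|x|)$, use the blow-up of $v$ at $\partial B_R$ against the boundedness of the entire solution $u$ to force strict inequality on an inner sphere $\partial B_{R'}$, and reach a contradiction via a comparison/maximum principle. The paper is much terser than you: it shifts $u$ by the constant $a:=\sup_{B_R}(u-v)>0$ so that $u-a$ touches $v$ from below at some interior point $x_0\in B_{R'}$, records that $L[u-a]\ge 0=L[v]$ in $B_{R'}$ by monotonicity of $f$, and then simply cites ``the maximum principle'' to conclude $0=\sup_{B_{R'}}(u-a-v)=\sup_{\partial B_{R'}}(u-a-v)<0$; it neither linearises nor treats the origin separately. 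Your more detailed route via linearisation along $w_t=tu+(1-t)v$ and the strong maximum principle is sound in outline, and your identification of the two delicate points (the origin, and writing $G[u]-G[v]$ as a genuine linear elliptic inequality) is accurate. One item to tighten: your admissibility justification (``$\{X:\lambda(B^{1/2}XB^{1/2})\in\Gamma_k\}$ is a convex cone'') is stated for \emph{fixed} $B$, but $B=D^2\Pi(Dw_t)$ depends on $Dw_t$ and hence on $t$ away from the contact point. The repair is standard: at the contact point $x_1\neq 0$ one has $Du(x_1)=Dv(x_1)\neq 0$, so $B$ is common there and matrix-convexity of $\Gamma_k$ gives admissibility of $w_t$ for every $t\in[0,1]$; openness of $\Gamma_k$, continuity of $(x,t)\mapsto(D^2w_t(x),Dw_t(x))$, and compactness of $[0,1]$ then give admissibility on $U\times[0,1]$ for some neighbourhood $U$ of $x_1$, which is what the integral-in-$t$ formula needs.
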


	\begin{proof}
		Let $v(x)=\varphi(|x|)$, and  by Lemma $\ref{205}$,  we know    $v(x) \in C^{2}\left(B_{R} \backslash\{0\}\right) \cap C^{1}\left(B_{R}\right)$	is a solution of  (\ref{221}).

		Let $L[w]=\sigma_{k}^{\frac{1}{k}}\left(\lambda\left(D_i\left(A\left(|Dw|\right)D_j w\right)\right)\right)-f(w)$. Suppose to the contrary that $u>v$ somewhere, then there is some constant $a>0$ such that $u-a$ touches $v$ from below, which means $u-a-v \leq 0$ in $B_{R}$. Suppose $u-a$ touches $v$ at some interior point $x_{0}$ in $B_{R}$. Then there is $R^{\prime} \in(0, R)$ such that $x_{0} \in B_{R^{\prime}}$. Since $v(x)=\varphi(|x|) \rightarrow \infty$ as $x \rightarrow \partial B_{R}$ and $u$ is bounded in $B_{R}$, we can assume $\sup _{\partial B_{R^{\prime}}}(u-a-v)<0$.
		
		It follows from (\ref{109}) that in $B_{R}^{\prime}$,
		$$
		\begin{aligned}
		L[u-a] &=\sigma_{k}^{\frac{1}{k}}\left(\lambda\left(D_i\left(A\left(|D\left(u-a\right)|\right)D_j \left(u-a\right)\right)\right)\right)
		-f(u-a) \\
		&=\left(\sigma_{k}^{\frac{1}{k}}\left(\lambda\left(D_i\left(A\left(|Du|\right)D_j u\right)\right)\right)-f(u)\right)+(f(u)-f(u-a)) \\
		& \geq 0=L[v].
		\end{aligned}
		$$
		Now $u-a$ is a subsolution and $v$ is a solution (with respect to $\mathrm{L}$ ). By the maximum principle,
		$$
		0=\sup _{B_{R^{\prime}}}(u-a-v)=\sup _{\partial B_{R^{\prime}}}(u-a-v)<0,
		$$
		which is impossible.
	\end{proof}
	
			\begin{lemma}\label{306}
		The inequality (\ref{101}) has a positive  solution $u \in C^{2}\left(\mathbb{R}^{n} \backslash\{0\}\right) \cap \Phi^{k}\left(\mathbb{R}^{n}\right)$ if and only if  the Cauchy problem (\ref{204}) has a positive solution $\varphi(r) \in C^{2}(0, \infty) \cap C^{1}[0,\infty)$  with $A\left(\varphi^{\prime}\left(r\right)\right)\varphi^{\prime}\left(r\right)\in C^{1}[0,\infty)$ for some positive number $a .$
	\end{lemma}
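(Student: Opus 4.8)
The plan is to prove the two implications separately, with the radial-solution machinery of Section 2 doing most of the work. For the ``if'' direction, suppose the Cauchy problem \eqref{204} has a positive solution $\varphi \in C^2(0,\infty) \cap C^1[0,\infty)$ with $A(\varphi')\varphi' \in C^1[0,\infty)$ defined on all of $[0,\infty)$. Then I would set $u(x) = \varphi(|x|)$ and invoke Lemma \ref{205} directly: it tells us $u \in C^2(\mathbb{R}^n \setminus \{0\}) \cap \Phi^k(\mathbb{R}^n)$ and that $u$ solves $\sigma_k(\lambda(D_i(A(|Du|)D_j u))) = f^k(u)$ pointwise, hence $\sigma_k^{1/k}(\lambda(D_i(A(|Du|)D_j u))) = f(u) \ge f(u)$, so $u$ is a positive entire solution of \eqref{101}. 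This direction is essentially a one-line citation of the preliminary lemmas, modulo checking that the domain is genuinely $\mathbb{R}^n$ (i.e. $R = \infty$), which is part of the hypothesis.

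For the ``only if'' direction, suppose \eqref{101} has a positive solution $u \in C^2(\mathbb{R}^n \setminus \{0\}) \cap \Phi^k(\mathbb{R}^n)$. The strategy is: by Lemma \ref{219} (local existence), for any fixed $a > 0$ the Cauchy problem \eqref{204} has a solution $\varphi$ on some maximal interval $[0, R_{\max})$ with $0 < R_{\max} \le \infty$, and by Lemma \ref{206} this $\varphi$ has all the claimed regularity and satisfies $\varphi'(0)=0$, $\varphi' > 0$. I would argue by contradiction that $R_{\max} = \infty$. If $R_{\max} < \infty$, then either $\varphi$ stays bounded as $r \to R_{\max}$ — in which case the right-hand side $F(r,\varphi)$ of \eqref{204} stays bounded, $\varphi'$ stays bounded (using \eqref{103} to invert $g(p)=pA(p)$ on a bounded set), and one can extend the solution past $R_{\max}$ by reapplying the local existence argument, contradicting maximality — or else $\varphi(r) \to \infty$ as $r \to R_{\max}$. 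In the latter blow-up case, $\varphi$ satisfies the hypotheses of the comparison Lemma \ref{305} with $R = R_{\max}$, so we conclude $u(x) \le \varphi(|x|)$ on $B_{R_{\max}}$; but $\varphi(|x|) \to \infty$ as $|x| \to R_{\max}$ forces no contradiction by itself — the contradiction instead comes from the fact that $u$ is finite while the comparison only bounds $u$ from above, so I need to be more careful here: the real point is that Lemma \ref{305} gives $u \le \varphi$, which is consistent, so instead the dichotomy must be resolved by showing the blow-up alternative is the one that \emph{cannot} happen when a global solution $u$ exists. The cleaner route: since $u$ is a global solution, for every $R' > R_{\max}$ we would want $\varphi$ to exist on $[0,R']$; so I would show that whenever $\varphi$ blows up at a finite $R_{\max}$, the Keller--Osserman-type integral $\int^\infty (\Psi^{-1}(\int^s f^k))^{-1}\,ds$ converges (this is the standard computation integrating the ODE \eqref{203}), and then derive a contradiction with the existence of $u$ via comparison on concentric balls — but since the present lemma is stated without assuming \eqref{110}, the intended argument must be purely the extension/continuation argument: a local solution either continues forever or blows up in finite ``time,'' and if it blows up at $R_{\max}$ then Lemma \ref{305} bounds the \emph{given global} $u$ by $\varphi$ on $B_{R_{\max}}$, which is fine, so one simply takes the globally-existing $\varphi$ directly. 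I would therefore restructure: start from the \emph{global} solution $u$, use it to build a barrier that prevents finite-time blow-up of $\varphi$, namely compare $\varphi$ against $u$ restricted to large balls.

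Concretely, for the ``only if'' direction I would instead argue: fix $a = u(0) > 0$ (or any $a \le \inf_{B_1} u$), solve \eqref{204} locally, and let $[0,R_{\max})$ be the maximal interval of existence. On this interval, $v := \varphi(|\cdot|)$ solves \eqref{221} with the \emph{equality} $\sigma_k^{1/k} = f$, while $u$ solves the \emph{inequality} $\sigma_k^{1/k} \ge f$; a comparison in the reverse direction (treating $\varphi$ as the subsolution relative to the operator and using that $\varphi$ starts below $u$ at the origin, together with the radial ODE being an ODE to which ODE comparison applies) should give $\varphi(r) \le \min_{|x|=r} u(x) < \infty$ for all $r < R_{\max}$. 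Since $u$ is continuous and finite on all of $\mathbb{R}^n$, this a priori bound shows $\varphi$ stays bounded on bounded intervals, hence (inverting $g$ on bounded sets via \eqref{103}) $\varphi'$ stays bounded, hence the local existence argument of Lemma \ref{219} can be reapplied at $R_{\max}$ to continue $\varphi$ — contradicting maximality unless $R_{\max} = \infty$. Thus $\varphi$ is the desired global positive solution of \eqref{204}, with regularity from Lemma \ref{206}. The main obstacle I anticipate is justifying the reverse comparison $\varphi \le u$ cleanly: the comparison Lemma \ref{305} as stated goes the other way (solution $\le$ supersolution blowing up at the boundary), so for the ``only if'' direction I expect to need a separate, easier ODE comparison argument on $[0,R_{\max})$ — comparing the scalar functions $r \mapsto \varphi(r)$ and $r \mapsto \min_{|x|=r}u(x)$, or alternatively running the standard integral-inequality manipulation on \eqref{203} — and making sure the boundary/initial data inequality $\varphi(0) = a \le u(0)$ propagates. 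This step, while conceptually routine, is where the care is needed; the rest is bookkeeping with the results already established.
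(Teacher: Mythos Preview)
Your ``if'' direction is fine and matches the paper.

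Your ``only if'' direction has a genuine gap. The reverse comparison you propose, $\varphi(r) \le \min_{|x|=r} u(x)$, goes in the wrong direction for the operator at hand: with $L[w] = \sigma_k^{1/k}(\lambda(D_i(A(|Dw|)D_jw))) - f(w)$, the global $u$ satisfies $L[u] \ge 0$ (so $u$ is a \emph{sub}solution), while $v = \varphi(|\cdot|)$ satisfies $L[v]=0$. The maximum principle yields $u \le v$, not $v \le u$, so there is no reason for $\varphi$ to sit below $u$ and hence no a~priori bound preventing blow-up. You flagged this yourself as the main obstacle, and it is a real one.

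The paper's argument avoids this entirely by exploiting the freedom in the initial value $a$. Argue by contradiction: if \emph{no} global solution of \eqref{204} exists, then for \emph{every} $a>0$ the solution $\varphi_a$ blows up at some finite $R=R(a)$. For each such $a$, Lemma~\ref{305} applies (since $\varphi_a(r)\to\infty$ as $r\to R$) and gives $u(x)\le\varphi_a(|x|)$ on $B_R$; in particular, evaluating at the origin yields $u(0)\le\varphi_a(0)=a$. Now simply choose $a=u(0)/2$ to obtain $u(0)\le u(0)/2$, a contradiction. You had all the pieces---you noted that Lemma~\ref{305} gives $u\le\varphi$ and said ``this is consistent''---but missed that evaluating the inequality at the origin, combined with the arbitrariness of $a$, is exactly what produces the contradiction.
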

\begin{proof}
	First, the sufficient condition is obvious. If there exists such a solution $\varphi(r)$ of $(\ref{204})$ for $R=+\infty$, let $u(x)=\varphi(r),\ r=|x|$. By Lemma \ref{205} and Lemma \ref{206}, 
	$\sigma_{k}^{\frac{1}{k}}\left(\lambda\left(D_i\left(A\left(|Du|\right)D_j u\right)\right)\right) = f\left(u\right)$ and $\lambda\left(D_i\left(A\left(|Du|\right)D_j u\right)\right) \in \Gamma_{k}$ for $x \in \mathbb{R}^{n}$. Thus $u(x) \in C^{2}\left(\mathbb{R}^{n} \backslash\{0\}\right) \cap \Phi^{k}\left(\mathbb{R}^{n}\right)$ is a required solution of (\ref{101}).

	Next, we will prove the necessary condition. On the contrary, suppose that no such function $\varphi(r)$ exists globally. Then for any positive number $a$, the Cauchy problem (\ref{204}) has a positive solution $\varphi(r)$ on some interval which cannot be a global solution. Hence, we assume $[0, R)$ is the maximal interval in which the solution exists. Since $\varphi^{\prime}(r)>0$ for $r>0$, we know $\varphi(r) \rightarrow \infty$ as $r \rightarrow R$. Then  by Lemma \ref{206}, $\varphi(|x|)$ satisfies (\ref{203}). By Lemma \ref{305},  any positive solution $u(x) \in C^{2}\left(\mathbb{R}^{n} \backslash\{0\}\right) \cap \Phi^{k}\left(\mathbb{R}^{n}\right)$ of $(\ref{101})$ would satisfy $u(x) \leq \varphi(|x|)$ for $x \in B_{R}$. In particular we have $u(0) \leq \varphi(0)=a$. However, since $a$ is arbitrary, we take $a=\frac{u(0)}{2}$ and obtain a contradiction, which means the necessary condition holds.
\end{proof}
\begin{proof}[Proof of Theorem {\upshape\ref{102}}]
Suppose to the contrary that inequality (\ref{101}) has a positive  solution $u \in C^{2}\left(\mathbb{R}^{n} \backslash\{0\}\right) \cap \Phi^{k}\left(\mathbb{R}^{n}\right)$. Then, by Lemma \ref{306}, the Cauchy problem (\ref{204}) has a positive solution $\varphi(r)\in C^{2}(0, \infty)\cap   C^{1}[0, \infty) $  with $A\left(\varphi^{\prime}\left(r\right)\right)\varphi^{\prime}\left(r\right)\in C^{1}[0,\infty)$ for some positive number a.
Since $f$ and $\varphi$ are both monotone non-decreasing, it follows from $(\ref{204})$ that
\begin{equation}\label{401}
A\left(\varphi^{\prime}(r)\right) \varphi^{\prime}(r)
=\left(\frac{n r^{k-n}}{C_n^k} \int_{0}^{r} s^{n-1} f^k(\varphi(s)) d s\right)^{\frac{1}{k}}\leq \left(\frac{1}{C_n^k}\right)^{\frac{1}{k}}rf\left(\varphi(r)\right)
, \ r>0.
\end{equation}
Substituting $(\ref{401})$ in (\ref{203}), we obtain
$$C_{n-1}^{k-1}\left(A\left(\varphi^{\prime}(r)\right)\varphi^{\prime}(r)\right)'\left(\frac{A\left(\varphi^{\prime}(r)\right)\varphi^{\prime}(r)}{r}\right)^{k-1}
\geq\frac{k}{n}f^k\left(\varphi\left(r\right)\right),\ r>0,$$
which comes to
\begin{equation}\label{402}
\left(\left(A\left(\varphi^{\prime}(r)\right)\varphi^{\prime}(r)\right)^k\right)'
\geq
\frac{k}{C_n^k}r^{k-1}f^k\left(\varphi\left(r\right)\right),\ r>0.
\end{equation}
We now integrate $(\ref{402})$ over $[0, r]$ to obtain
\begin{equation*}
\left(A\left(\varphi^{\prime}(r)\right)\varphi^{\prime}(r)\right)^k
\geq\frac{k}{C_n^k}\int_{0}^{r}s^{k-1}f^k\left(\varphi\left(s\right)\right) ds
\geq\frac{1}{C_n^k}r^{k}f^k\left(a\right),\ r>0,
\end{equation*}
which leads to
\begin{equation}\label{403}
A\left(\varphi^{\prime}(r)\right)\varphi^{\prime}(r)
\geq
\left(\frac{1}{C_n^k}\right)^{\frac{1}{k}} rf\left(a\right),\ r>0.
\end{equation}
By (\ref{103}), we see that
\begin{equation*}
\left(\frac{1}{C_n^k}\right)^{\frac{1}{k}} rf\left(a\right)
\leq A\left(\varphi^{\prime}(r)\right)\varphi^{\prime}(r) \leq 
\lim _{p \rightarrow \infty} p A(p)<\infty, \ r>0.
\end{equation*}
Letting $r \rightarrow \infty$ in the above, we have a contradiction. This completes the proof.
\end{proof}
	Next we consider some properties of the function (\ref{107}). By (\ref{103}), we know
$$
\Psi'(p)=p \left(\left(p A\left(p\right)\right)^k\right)'> 0, \ p > 0,
$$
then $\Psi$ is strictly monotone increasing on $(0,\infty)$ and $\Psi(0)=0$. By 
$$
\Psi(p)+\int_{0}^{1} \left(tA(t)\right)^k d t
=p \left(p A\left(p\right)\right)^k-\int_{1}^{p} \left(t A(t)\right)^k d t > \left(p A\left(p\right)\right)^k, \ p>1,
$$
we have $\lim _{p \rightarrow \infty} \Psi(p)=\infty$.
Hence the inverse function of $\Psi$ on $[0,\infty)$ exists, denoted by $\Psi^{-1}$. Clearly $\Psi^{-1}$ is a strictly monotone increasing function and satisfies $\lim _{p \rightarrow \infty} \Psi^{-1}(p)=\infty$.

 \begin{lemma}\label{104}
		Assume that $A$  satisfies (\ref{103}), (\ref{116}) and $f$ satisfies (\ref{109}). If
	\begin{equation}\label{106}
	\int^{\infty}\left(\Psi^{-1}\left(\int^{s}f^k\left(t\right) dt\right)\right)^{-1} d s<\infty,
	\end{equation}
	then the inequality (\ref{101}) has no positive solution $u\in C^{2}\left(\mathbb{R}^{n} \backslash\{0\}\right) \cap \Phi^{k}\left(\mathbb{R}^{n}\right)$ .
\end{lemma}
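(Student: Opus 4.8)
The plan is to argue by contradiction: assume that (\ref{101}) has a positive solution $u\in C^{2}(\mathbb{R}^{n}\setminus\{0\})\cap\Phi^{k}(\mathbb{R}^{n})$, invoke Lemma \ref{306} to produce a global positive solution $\varphi(r)\in C^{2}(0,\infty)\cap C^{1}[0,\infty)$ of the Cauchy problem (\ref{204}) with $A(\varphi'(r))\varphi'(r)\in C^{1}[0,\infty)$, and then derive from (\ref{204}) (equivalently (\ref{203})) a differential inequality for $\varphi$ whose integration contradicts the convergence of the integral in (\ref{106}). Write $g(\varphi'(r))=A(\varphi'(r))\varphi'(r)$ for brevity. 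From Lemma \ref{206} we already know $\varphi'>0$ on $(0,\infty)$, $\varphi'(0)=0$, and that $g(\varphi'(r))$ is $C^1$ up to $r=0$.

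First I would extract the key inequality. From (\ref{204}) we get, exactly as in the first lines of the proof of Theorem \ref{102}, that
\[
\bigl(g(\varphi'(r))\bigr)^{k}=\frac{n r^{k-n}}{C_{n}^{k}}\int_{0}^{r}s^{n-1}f^{k}(\varphi(s))\,ds\le\frac{1}{C_{n}^{k}}r^{k}f^{k}(\varphi(r)),
\]
and combining this with (\ref{203}) in the form $\bigl(r^{n-k}g(\varphi'(r))^{k}\bigr)'=\frac{n}{C_{n}^{k}}r^{n-1}f^{k}(\varphi(r))$ yields, after discarding the favorable lower-order term,
\[
\bigl(g(\varphi'(r))^{k}\bigr)'\le\frac{n}{C_{n}^{k}}r^{k-1}f^{k}(\varphi(r))\le k\,r^{-1}g(\varphi'(r))^{k}+\ \text{(nonneg.)},
\]
but the cleaner route is to multiply (\ref{203}) by $\varphi'(r)$ and recognize the left side as a derivative of $\Psi$. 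Indeed, since $\Psi'(p)=p\bigl((pA(p))^{k}\bigr)'$, the chain rule gives $\frac{d}{dr}\Psi(\varphi'(r))=\varphi'(r)\,\bigl(g(\varphi'(r))^{k}\bigr)'$. Multiplying the first equality in (\ref{203}), namely $\frac{C_{n}^{k}r^{1-n}}{n}\bigl(r^{n-k}g(\varphi'(r))^{k}\bigr)'=f^{k}(\varphi(r))$, by $\varphi'(r)$ and using the bound above to absorb the $\frac{n-k}{k}\frac{g}{r}$ term, I would obtain an inequality of the shape
\[
\frac{d}{dr}\Psi(\varphi'(r))\ \ge\ c\,\varphi'(r)\,f^{k}(\varphi(r))\ =\ c\,\frac{d}{dr}\!\left(\int^{\varphi(r)}f^{k}(t)\,dt\right)
\]
for a positive constant $c$ depending only on $n,k$ (one must check signs carefully: this is where the monotonicity of $f$ and $\varphi$ and the $\Gamma_k$-condition from Lemma \ref{205} are used to control the term proportional to $g/r$). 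Integrating from $0$ to $r$ and using $\varphi'(0)=0$, $\Psi(0)=0$ gives $\Psi(\varphi'(r))\ge c\int_{\varphi(0)}^{\varphi(r)}f^{k}(t)\,dt$, hence $\varphi'(r)\ge\Psi^{-1}\!\bigl(c\int^{\varphi(r)}f^{k}(t)\,dt\bigr)$ since $\Psi^{-1}$ is increasing.

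Then I would separate variables: from $\varphi'(r)\ge\Psi^{-1}\!\bigl(c\int^{\varphi(r)}f^{k}\bigr)$ we get
\[
\frac{\varphi'(r)}{\Psi^{-1}\!\bigl(c\int^{\varphi(r)}f^{k}(t)\,dt\bigr)}\ \ge\ 1,\qquad r>0,
\]
and integrating over $[r_{0},r]$ (with $r_{0}>0$ fixed) and substituting $s=\varphi(\rho)$ yields
\[
\int_{\varphi(r_{0})}^{\varphi(r)}\frac{ds}{\Psi^{-1}\!\bigl(c\int^{s}f^{k}(t)\,dt\bigr)}\ \ge\ r-r_{0}\ \longrightarrow\ \infty\quad\text{as }r\to\infty,
\]
because $\varphi(r)\to\infty$. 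But the left side is bounded by $\int^{\infty}\bigl(\Psi^{-1}(c\int^{s}f^{k})\bigr)^{-1}ds$, which is finite by (\ref{106}) (the constant $c$ inside $\Psi^{-1}$ only changes the value of the integral by a bounded factor, or can be absorbed by a change of the lower limit, since convergence at infinity is what matters and $\Psi^{-1}$ is monotone). This contradiction proves the lemma.

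The main obstacle I anticipate is the bookkeeping in the step that converts (\ref{203}) into the clean inequality $\frac{d}{dr}\Psi(\varphi'(r))\ge c\,\varphi'(r)f^{k}(\varphi(r))$: one has to handle the extra curvature term $C_{n-1}^{k}\bigl(g/r\bigr)^{k}$ (or equivalently the $\frac{n-k}{k}\frac{g}{r}$ term), show it has the right sign after multiplication by $\varphi'(r)$, and verify that the constant emerging in front of $\varphi'f^{k}$ is genuinely positive and independent of $r$ — all while only using $\varphi'>0$, the $\Gamma_k$-positivity from Lemma \ref{205}, and the a priori bound $g(\varphi'(r))^{k}\le C_{n}^{-k}r^{k}f^{k}(\varphi(r))$. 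A secondary point to get right is that $\Psi^{-1}$ of a constant multiple of $\int^{s}f^{k}$ still produces a convergent integral at $\infty$; this follows because $\Psi^{-1}$ is increasing and, under (\ref{116}), $\Psi$ grows at least like a power, so constants can be absorbed into the lower limit of integration.
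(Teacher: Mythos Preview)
Your strategy matches the paper's exactly: argue by contradiction, invoke Lemma~\ref{306} to get a global radial $\varphi$, derive the lower bound $\bigl((A(\varphi')\varphi')^{k}\bigr)'\ge\frac{k}{C_{n}^{k}}\,r^{k-1}f^{k}(\varphi)$ (this is the paper's (\ref{402})), multiply by $\varphi'$ to recognize $\frac{d}{dr}\Psi(\varphi')$, integrate, then separate variables to contradict (\ref{106}).

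Two places need tightening. First, your displayed chain $(g^{k})'\le\frac{n}{C_{n}^{k}}r^{k-1}f^{k}(\varphi)\le\cdots$ has the inequality the wrong way; what is actually required (and what your phrase ``use the bound above to absorb the $\tfrac{n-k}{k}\tfrac{g}{r}$ term'' correctly points to) is the \emph{lower} bound: from $(g^{k})'=\frac{n}{C_{n}^{k}}r^{k-1}f^{k}(\varphi)-(n-k)r^{-1}g^{k}$ together with $g^{k}\le\frac{1}{C_{n}^{k}}r^{k}f^{k}(\varphi)$ one gets $(g^{k})'\ge\frac{k}{C_{n}^{k}}r^{k-1}f^{k}(\varphi)$. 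Second, you cannot integrate from $0$ and keep a constant $c$ independent of $r$, since the factor $r^{k-1}$ is only $\ge 1$ for $r\ge 1$; the paper integrates on $[1,r]$ and simply drops the nonnegative term $\Psi(\varphi'(1))$. The paper also first establishes $\varphi'(r)\to\infty$ (hence $\varphi(r)\to\infty$) from (\ref{403}) and (\ref{116}) before separating variables, rather than asserting it. Finally, your remark that ``$\Psi$ grows at least like a power'' to absorb the constant inside $\Psi^{-1}$ is not justified by the hypotheses of this lemma (that is (\ref{108}), not assumed here); the paper simply writes the final divergent integral with the constant $\frac{k}{C_{n}^{k}}$ still inside $\Psi^{-1}$ and declares the contradiction with (\ref{106}), treating the condition as insensitive to such constants.
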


\begin{proof}
 Suppose to the contrary that the inequality (\ref{101}) has a positive  solution $u \in C^{2}\left(\mathbb{R}^{n} \backslash\{0\}\right) \cap \Phi^{k}\left(\mathbb{R}^{n}\right)$. Then, by Lemma \ref{306}, the Cauchy problem (\ref{204}) has a positive solution $\varphi(r)\in C^{2}(0, \infty) \cap C^{1}[0, \infty)$  with $A\left(\varphi^{\prime}\left(r\right)\right)\varphi^{\prime}\left(r\right)\in C^{1}[0,\infty)$ for some positive number a. 
  As above we obtain (\ref{402}) and (\ref{403}). Letting $r \rightarrow \infty$ in (\ref{403}), by (\ref{103}), we have $\lim _{r \rightarrow \infty} \varphi^{\prime}(r)=\infty$. Therefore $\lim _{r \rightarrow \infty} \varphi(r)=\infty$. Multiplying (\ref{402}) by $\varphi^{\prime}>0$ , we have
  $$
 \Psi'\left(\varphi^{\prime}(r)\right)
 =\varphi^{\prime}(r)\left(\left(A\left(\varphi^{\prime}(r)\right)\varphi^{\prime}(r)\right)^k\right)'
 \geq\frac{k}{C_n^k}f^k\left(\varphi\left(r\right)\right)\varphi^{\prime}(r),\ r> 1,
 $$
 and then integrating on $[1, r]$, we obtain
   $$
\Psi\left(\varphi^{\prime}(r)\right)
\geq\frac{k}{C_n^k}\int_{\varphi\left(1\right)}^{\varphi\left(r\right)}f^k\left(s\right) ds, \ r>1.
$$
Hence
 $$
\left(\Psi^{-1}\left(\frac{k}{C_n^k}\int_{\varphi\left(1\right)}^{\varphi\left(r\right)}f^k\left(s\right) ds\right)\right)^{-1} \varphi^{\prime}(r) \geq 1,\ r> 1.
$$
Integrating on $[1, r]$, we have
\begin{equation}\label{404}
\int_{\varphi(1)}^{\varphi(r)}\left(\Psi^{-1}\left(\frac{k}{C_n^k}\int_{\varphi\left(1\right)}^{s}f^k\left(t\right) dt\right)\right)^{-1} d s \geq r-1, \ r>1.
\end{equation}
Letting $r \rightarrow \infty$ in (\ref{404}), we have
\begin{equation*}
\int_{\varphi(1)}^{\infty}\left(\Psi^{-1}\left(\frac{k}{C_n^k}\int_{\varphi\left(1\right)}^{s}f^k\left(t\right) dt\right)\right)^{-1} d s = \infty,
\end{equation*}
which contradicts (\ref{106}). This completes the proof.
\end{proof}

 \begin{lemma}\label{105}
	Assume that $A$  satisfies (\ref{103}), (\ref{116}) and $f$ satisfies (\ref{109}).  If (\ref{110}) holds,
	then the inequality (\ref{101}) has a positive  solution $u\in C^{2}\left(\mathbb{R}^{n} \backslash\{0\}\right) \cap \Phi^{k}\left(\mathbb{R}^{n}\right)$.
\end{lemma}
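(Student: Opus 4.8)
The plan is to reduce, via Lemma~\ref{306}, to constructing a global positive solution $\varphi\in C^{1}[0,\infty)\cap C^{2}(0,\infty)$ with $A(\varphi')\varphi'\in C^{1}[0,\infty)$ of the Cauchy problem (\ref{204}) for some fixed $a>0$. So I would fix $a>0$; by Lemma~\ref{219} and Lemma~\ref{206} a solution with this regularity exists on some interval, and I let $[0,R)$ be its maximal interval of existence. Suppose, for contradiction, that $R<\infty$. The first step is to show $\varphi(r)\to\infty$ as $r\to R$: otherwise $\varphi$, hence $s\mapsto s^{n-1}f^{k}(\varphi(s))$, is bounded on $[0,R)$, so by (\ref{204}) $A(\varphi')\varphi'$ is bounded on $[0,R)$, and since (\ref{116}) makes $g(p):=pA(p)$ a homeomorphism of $[0,\infty)$ onto $[g(0),\infty)$, $\varphi'$ is bounded too; then $\varphi,\varphi',g(\varphi')$ extend continuously up to $r=R$, and re-running the local-existence construction of Lemma~\ref{219} based at $r=R$ (for $r>R$ the right side of (\ref{204}) depends on the already-known solution on $[0,R]$ only through the fixed number $\int_{0}^{R}s^{n-1}f^{k}(\varphi(s))\,ds$) continues $\varphi$ past $R$, contradicting maximality. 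In particular $\int^{\varphi(r)}f^{k}\to\infty$ as $r\to R$ as well.

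Next I would run the Keller--Osserman estimate, which is the computation of Lemma~\ref{104} with the inequality reversed. Dropping the nonnegative term $C_{n-1}^{k}\bigl(A(\varphi')\varphi'/r\bigr)^{k}$ from (\ref{203}) gives, for $0<r<R$,
\[
\bigl((A(\varphi')\varphi')^{k}\bigr)'\le \frac{k\,r^{k-1}}{C_{n-1}^{k-1}}\,f^{k}(\varphi)\le \frac{k\,R^{k-1}}{C_{n-1}^{k-1}}\,f^{k}(\varphi).
\]
Multiplying by $\varphi'>0$ and using $\Psi'(\varphi')\varphi''=\varphi'\bigl((A(\varphi')\varphi')^{k}\bigr)'$ (which follows from $\Psi'(p)=p\bigl((pA(p))^{k}\bigr)'$) gives $\frac{d}{dr}\Psi(\varphi'(r))\le \widetilde C\, f^{k}(\varphi(r))\varphi'(r)$ with $\widetilde C:=kR^{k-1}/C_{n-1}^{k-1}$. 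Integrating over $[r_{0},r]$ with $r_{0}\in(0,R)$ fixed and absorbing the constant $\Psi(\varphi'(r_{0}))$ (possible once $\int_{\varphi(r_{0})}^{\varphi(r)}f^{k}$ is large enough, i.e.\ $r$ near $R$) yields $\Psi(\varphi'(r))\le 2\widetilde C\int_{\varphi(r_{0})}^{\varphi(r)}f^{k}(t)\,dt$, so
\[
\frac{\varphi'(r)}{\Psi^{-1}\!\Bigl(2\widetilde C\int_{\varphi(r_{0})}^{\varphi(r)}f^{k}(t)\,dt\Bigr)}\le 1 .
\]
Integrating once more from a suitable $r_{1}\in(0,R)$ to $r$ and letting $r\to R$ (so $\varphi(r)\to\infty$) gives $\int_{\varphi(r_{1})}^{\infty}\bigl(\Psi^{-1}(2\widetilde C\int^{s}f^{k})\bigr)^{-1}ds\le R<\infty$.

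To close, I would show this contradicts (\ref{110}). The key observation is that a positive constant factor inside $\Psi^{-1}$ does not affect the divergence of $\int^{\infty}\bigl(\Psi^{-1}(\int^{s}f^{k})\bigr)^{-1}ds$: for a factor $c\le1$ this is immediate from the monotonicity of $\Psi^{-1}$, and for $c\ge1$ the change of variables $\tau=\int^{s}f^{k}(t)\,dt$, combined with the monotonicity of $f$ and of $\Psi^{-1}$, gives $\int^{\infty}\bigl(\Psi^{-1}(c\int^{s}f^{k})\bigr)^{-1}ds\ge c^{-1}\int^{\infty}\bigl(\Psi^{-1}(\int^{s}f^{k})\bigr)^{-1}ds=\infty$. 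Hence the finite integral obtained above is impossible, so $R=\infty$; then $u(x)=\varphi(|x|)$ is, by Lemma~\ref{306} (equivalently Lemma~\ref{205}), the desired positive solution in $C^{2}(\mathbb{R}^{n}\setminus\{0\})\cap\Phi^{k}(\mathbb{R}^{n})$.

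I expect the main obstacle to be the finite-$R$ extension step of the first paragraph — showing that boundedness of $\varphi$ forces global continuation — since that is exactly where hypothesis (\ref{116}) is indispensable and where the argument parts ways with the case (\ref{114}) of Theorem~\ref{102}; the remainder is careful bookkeeping of the dimensional constants so that none of them disturbs the Keller--Osserman dichotomy.
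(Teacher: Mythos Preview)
Your proposal is correct and follows essentially the same route as the paper: reduce via Lemma~\ref{306} to the radial Cauchy problem, assume the maximal interval $[0,R)$ is finite, show $\varphi\to\infty$ at $R$ by a continuation argument, drop the nonnegative $C_{n-1}^{k}(A(\varphi')\varphi'/r)^{k}$ term from (\ref{203}), multiply by $\varphi'$ to get $(\Psi(\varphi'))'\le C f^{k}(\varphi)\varphi'$, and integrate twice to contradict (\ref{110}).

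Two small remarks where your write-up differs from the paper. First, the paper integrates $(\Psi(\varphi'))'\le C f^{k}(\varphi)\varphi'$ from $0$ rather than from some $r_{0}>0$; since $\varphi'(0)=0$ and $\Psi(0)=0$, the boundary term vanishes and there is no need to ``absorb $\Psi(\varphi'(r_{0}))$'' or to introduce the factor $2$. This is cleaner and avoids the auxiliary choice of $r_{1}$. Second, the paper simply asserts that the finite integral $\int_{a}^{\infty}\bigl(\Psi^{-1}(C\int_{a}^{s}f^{k})\bigr)^{-1}\,ds<\infty$ contradicts (\ref{110}), leaving implicit the point you spell out, namely that a multiplicative constant inside $\Psi^{-1}$ does not change divergence of the Keller--Osserman integral. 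Your substitution argument for this is valid (and arguably a useful addendum, since the paper's version with the constant $nR^{k-1}/C_{n}^{k}$ relies on the same fact). Note also that $k/C_{n-1}^{k-1}=n/C_{n}^{k}$, so your $\widetilde C$ agrees with the paper's constant.
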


\begin{proof}
By Lemma \ref{306}, we only need to prove that the 
Cauchy problem (\ref{204}) has a positive solution 
$\varphi(r)\in C^{2}(0, \infty)\cap C^{1}[0, \infty)$  with $A\left(\varphi^{\prime}\left(r\right)\right)\varphi^{\prime}\left(r\right)\in C^{1}[0,\infty)$ for some positive number $a$.
 Suppose to the contrary that  no such solution of (\ref{204}) exists. As in the proof of Lemma \ref{306}, the problem (\ref{204}) has such a positive solution $\varphi(r)$  valid on the maximal existence interval $[0, R)$. By Lemma $\ref{206}$, we know that $\varphi$ satisfies  (\ref{203}).
 
 Next, we show that $\varphi(R)=\lim_{r \rightarrow R}\varphi(r)=\infty,\ r\in [0, R)$. Suppose to the contrary that $\varphi(R)<\infty$. Then, by (\ref{204}),  $\varphi^{\prime}(R)<\infty$  exists. By the continuation theorem of the Cauchy problem (\ref{204}), $\varphi$ can be extended as a solution to the right beyond $R$. This contradicts the choice of $R$. Hence we  have $\varphi(R)=\infty$.
 
Since $\varphi^{\prime}(r) > 0$ for $0 < r<R$, then by (\ref{203}) and (\ref{103}), we have
		\begin{equation*}
		C_{n-1}^{k-1} \left(A\left(\varphi^{\prime}(r)\right)\varphi^{\prime}(r)\right)'\left(\frac{A\left(\varphi^{\prime}(r)\right)\varphi^{\prime}(r)}{r}\right)^{k-1}
\leq
f^k\left(\varphi\left(r\right)\right),\ 0<r<R.
\end{equation*}	
which comes to
\begin{equation*}
\left(\left(A\left(\varphi^{\prime}(r)\right)\varphi^{\prime}(r)\right)^k\right)'
\leq \frac{n}{C_n^k}r^{k-1} f^k\left(\varphi\left(r\right)\right), \ 0<r<R.
\end{equation*}
Multiplying the above by $\varphi^{\prime} > 0$, we have
 $$
\Psi'\left(\varphi^{\prime}(r)\right)
=\varphi^{\prime}(r)\left(\left(A\left(\varphi^{\prime}(r)\right)\varphi^{\prime}(r)\right)^k\right)'
\leq\frac{n R^{k-1}}{C_n^k}f^k\left(\varphi\left(r\right)\right)\varphi^{\prime}(r), \ 0<r<R,
$$
and then integrating on $[0, r],\ r<R$, we obtain
	   $$
\Psi\left(\varphi^{\prime}(r)\right)
\leq\frac{n R^{k-1}}{C_n^k}\int_{a}^{\varphi\left(r\right)}f^k\left(s\right) ds, \ 0<r<R.
$$
Hence
	$$
\left(\Psi^{-1}\left(\frac{n R^{k-1}}{C_n^k}\int_{a}^{\varphi\left(r\right)}f^k\left(s\right) ds\right)\right)^{-1} \varphi^{\prime}(r) \leq 1, \ 0<r<R.
$$
Integrating on $[0, r], \ r<R$, we have
	\begin{equation*}
\int_{a}^{\varphi(r)}\left(\Psi^{-1}\left(\frac{n R^{k-1}}{C_n^k}\int_{a}^{\varphi\left(r\right)}f^k\left(s\right) ds\right)\right)^{-1} d s 
\leq r, \ 0<r<R.
\end{equation*}
Letting $r \rightarrow R$ in the above, we have
		\begin{equation*}
\int_{a}^{\infty}\left(\Psi^{-1}\left(\frac{n R^{k-1}}{C_n^k}\int_{a}^{s}f^k\left(t\right) dt\right)\right)^{-1} d s 
\leq R<\infty,
\end{equation*}	
which contradicts (\ref{110}).  This  completes the proof.
\end{proof}
Combining Lemma \ref{104} and Lemma \ref{105}, we proof Theorem \ref{115}.

\begin{lemma}\label{307}
	Assume that (\ref{108}) holds. Then
	$$
	0<\liminf_{p \rightarrow \infty}  \frac{\Psi(p)}{p^{k(m-1)+1}} \leq \limsup_{p \rightarrow \infty} \frac{\Psi(p)}{p^{k(m-1)+1}}<\infty.
	$$
	Consequently, we have
	$$
	0<\liminf_{p \rightarrow \infty}  \frac{\Psi^{-1}(p)}{p^{\frac{1}{k(m-1)+1}}} \leq \limsup_{p \rightarrow \infty} \frac{\Psi^{-1}(p)}{p^{\frac{1}{k(m-1)+1}}}<\infty.
	$$
\end{lemma}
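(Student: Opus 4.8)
The plan is to argue directly from the formula $\Psi(p)=p\,G(p)^{k}-\int_{0}^{p}G(t)^{k}\,dt$, where $G(p):=pA(p)$; by (\ref{103}) the function $G$ is continuous on $[0,\infty)$, vanishes at $0$, and is strictly increasing on $(0,\infty)$, so $t\mapsto G(t)^{k}$ is nondecreasing on $[0,\infty)$. Writing $\alpha:=k(m-1)+1>1$, condition (\ref{108}) furnishes constants $0<c_{1}\le c_{2}<\infty$ and $p_{0}>0$ with
\begin{equation*}
\tfrac{c_{1}}{2}\,t^{m-1}\ \le\ G(t)\ \le\ 2c_{2}\,t^{m-1},\qquad t\ge p_{0},
\end{equation*}
hence $(c_{1}/2)^{k}t^{k(m-1)}\le G(t)^{k}\le (2c_{2})^{k}t^{k(m-1)}$ for $t\ge p_{0}$. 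The upper estimate is then immediate: since $\int_{0}^{p}G(t)^{k}\,dt\ge 0$, we get $\Psi(p)\le p\,G(p)^{k}\le(2c_{2})^{k}p^{\alpha}$ for $p\ge p_{0}$, so $\limsup_{p\to\infty}\Psi(p)/p^{\alpha}<\infty$.

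For the lower estimate I would \emph{not} bound $\int_{0}^{p}G(t)^{k}\,dt$ by $(2c_{2})^{k}\int_{0}^{p}t^{k(m-1)}\,dt$: when $c_{2}/c_{1}$ is large this is too wasteful and can even make the resulting bound on $\Psi$ negative, because $\alpha$ may be close to $1$. Instead, fix $\lambda\in(0,1)$ with $\lambda^{m-1}=c_{1}/(8c_{2})$ and, for $p\ge p_{0}/\lambda$, split the integral at $\lambda p$. Monotonicity of $G^{k}$ gives $\int_{\lambda p}^{p}G(t)^{k}\,dt\le(1-\lambda)p\,G(p)^{k}$ and $\int_{0}^{\lambda p}G(t)^{k}\,dt\le\lambda p\,G(\lambda p)^{k}\le(2c_{2})^{k}\lambda^{\alpha}p^{\alpha}$. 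Subtracting,
\begin{equation*}
\Psi(p)\ \ge\ \lambda p\,G(p)^{k}-(2c_{2})^{k}\lambda^{\alpha}p^{\alpha}\ \ge\ \left[\lambda\Bigl(\tfrac{c_{1}}{2}\Bigr)^{k}-(2c_{2})^{k}\lambda^{\alpha}\right]p^{\alpha},
\end{equation*}
and the choice of $\lambda$ makes $(2c_{2})^{k}\lambda^{\alpha}=(2c_{2})^{k}\lambda\,(\lambda^{m-1})^{k}=\lambda(c_{1}/4)^{k}$, so the bracket equals the positive constant $\lambda\,c_{1}^{k}(2^{-k}-4^{-k})$. Therefore $\liminf_{p\to\infty}\Psi(p)/p^{\alpha}>0$, and together with the upper bound this proves the first pair of inequalities.

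It remains to pass to $\Psi^{-1}$, which is routine. As recalled just before Lemma~\ref{104}, $\Psi:[0,\infty)\to[0,\infty)$ is continuous, strictly increasing, satisfies $\Psi(0)=0$, and $\Psi(p)\to\infty$, so $\Psi^{-1}$ is continuous, strictly increasing, and $\Psi^{-1}(q)\to\infty$. The first part yields $q_{0}>0$ and $0<c_{*}\le C_{*}<\infty$ with $c_{*}p^{\alpha}\le\Psi(p)\le C_{*}p^{\alpha}$ for $p\ge q_{0}$. For $q$ large set $p=\Psi^{-1}(q)\ge q_{0}$; then $c_{*}p^{\alpha}\le q\le C_{*}p^{\alpha}$, that is $(q/C_{*})^{1/\alpha}\le\Psi^{-1}(q)\le(q/c_{*})^{1/\alpha}$, which gives $0<\liminf_{q\to\infty}\Psi^{-1}(q)/q^{1/\alpha}\le\limsup_{q\to\infty}\Psi^{-1}(q)/q^{1/\alpha}<\infty$. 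I expect the only genuine obstacle to be the lower bound above: the naive leading-order estimate of the subtracted integral fails, and the trick is to cut at a fraction $\lambda p$ chosen small enough (in terms of $c_{1}/c_{2}$ and $m$) that the annular piece on $[\lambda p,p]$ is absorbed by monotonicity of $G^{k}$ while the surviving term $\lambda p\,G(p)^{k}$ still dominates.
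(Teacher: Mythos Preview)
Your proof is correct and follows essentially the same route as the paper. Both arguments split $\int_0^p (tA(t))^k\,dt$ at a fraction $\theta p$ (the paper) or $\lambda p$ (you), bound each piece via the monotonicity of $t\mapsto(tA(t))^k$, and then choose that fraction small in terms of the ratio of the constants coming from (\ref{108}) so that the surviving term $\theta\,p(pA(p))^k$ dominates; your explicit choice $\lambda^{m-1}=c_1/(8c_2)$ plays the same role as the paper's choice of $\theta$ with $(C_2/C_1)\theta^{k(m-1)}<1/2$, and your passage to $\Psi^{-1}$ merely spells out what the paper leaves implicit.
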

\begin{proof}
	By (\ref{107}) and (\ref{108}), we have
	$$\limsup_{p \rightarrow \infty} \frac{\Psi(p)}{p^{k(m-1)+1}}
	\leq
	\limsup _{p \rightarrow \infty}
	\frac{p \left(p A\left(p\right)\right)^k}{p^{k(m-1)+1}}
	<\infty.$$	
	Next we show that there exist positive constants $P$ and $C$ such that	
	\begin{equation}\label{301}
	\Psi(p) \geq C p^{k(m-1)+1}, \ p \geq P,
	\end{equation}
	which implies that 
	$$\liminf_{p \rightarrow \infty}  \frac{\Psi(p)}{p^{k(m-1)+1}}>0.$$
	The condition (\ref{108}) implies the existence of positive constants $P_{1}, C_{1}$ and $C_{2}$ such that
	\begin{equation}\label{302}
	C_{1} p^{k(m-1)+1} 
	\leq
	p \left(p A\left(p\right)\right)^k 
	\leq 
	C_{2} p^{k(m-1)+1},\ p \geq P_{1} .
	\end{equation}
	By (\ref{302}), choose $\theta>0$ so small that $\left(C_{2} / C_{1}\right) \theta^{k(m-1)}<1 / 2$, then we have	
	\begin{equation}\label{303}
	\frac{\left(\theta p\right)\left(\left(\theta p\right) A\left(\theta p\right)\right)^k}{p \left(p A\left(p\right)\right)^k} 
	\leq 
	\frac{C_{2} \left(\theta p\right)^{k(m-1)+1}}{C_{1} p^{k(m-1)+1}}
	=
	\frac{C_{2} \theta^{k(m-1)+1}}{C_{1}}
	<\frac{1}{2} \theta,\ p \geq P,
	\end{equation}
	where $P=P_{1} / \theta$. We observe that
	\begin{equation}\label{304}
	\begin{aligned}
	\int_{0}^{p} \left(t A(t)\right)^k d t &=\int_{0}^{\theta p} \left(t A(t)\right)^k d t
	+\int_{\theta p}^{p} \left(t A(t)\right)^k d t \\
	& \leq \left(\theta p\right)\left(\left(\theta p\right) A\left(\theta p\right)\right)^k
	+ \left(p-\theta p\right)\left(p A\left(p\right)\right)^k\\
	&=p \left(p A\left(p\right)\right)^k
	\left(1-\theta+\frac{\left(\theta p\right) \left(\left(\theta p\right) A\left(\theta p\right)\right)^k}{p \left(p A\left(p\right)\right)^k}\right),\ p \geq P.
	\end{aligned}
	\end{equation}	
	From (\ref{304}), (\ref{303}), and (\ref{302}) it follows that
	\begin{equation*}
	\begin{aligned}
	\Psi(p)
	&=p \left(p A\left(p\right)\right)^k\left(1-\frac{\int_{0}^{p} \left(t A(t)\right)^k d t}{p \left(p A\left(p\right)\right)^k}\right) \\
	& \geq
	p \left(p A\left(p\right)\right)^k
	\left(\theta-\frac{\left(\theta p\right) \left(\left(\theta p\right) A\left(\theta p\right)\right)^k}{p \left(p A\left(p\right)\right)^k}\right) \\
	&>\frac{1}{2} \theta p \left(p A\left(p\right)\right)^k\\
	&\geq \frac{1}{2} \theta C_{1} p^{k(m-1)+1},\ p \geq P,
	\end{aligned}
	\end{equation*}	
	which gives (\ref{301}). This completes the proof.
\end{proof}
By Theorem \ref{115} and Lemma  \ref{307}, we can get  Corollary \ref{113} immediately.
\begin{proof}[Proof of Corollary {\upshape\ref{118}}]
 The proof of Corollary $\ref{118}$ is  similar to the above. Most of the properties we need are almost the same as we have discussed. Since $f$ is now a positive, continuous and monotone non-decreasing function defined on $\mathbb{R}$, we do not need $a$ to be positive in Lemma \ref{206}, Lemma $\ref{219}$ and Lemma $\ref{306}$ to get the similar conclusions.
 \end{proof}

\end{document}